\documentclass[a4paper,reqno,twoside,11pt]{amsart}

\usepackage{a4wide}
\usepackage{amssymb}
\usepackage{latexsym}
\usepackage{amsmath}
\usepackage{color,soul}
\usepackage{amssymb,dsfont,tikz-cd,hyperref}

\theoremstyle{plain}
\newtheorem{proposition}{\sc Proposition}[section]
\newtheorem{theorem}[proposition]{\sc Theorem}

\newtheorem{corollary}[proposition]{\sc Corollary}

\theoremstyle{definition}
\newtheorem{definition}[proposition]{\sc Definition}
\newtheorem{example}[proposition]{\sc Example}

\theoremstyle{remark}

\setcounter{secnumdepth}{3}
\setcounter{section}{0}

\numberwithin{equation}{section}
\pagestyle{headings}

\setcounter{tocdepth}{1}

\begin{document}

\title[A state-space approach to quantum permutations]
 {A state-space approach to quantum permutations}

\author{J.P. McCarthy}

\address{Department of Mathematics, Munster Technological University, Cork, Ireland.}
\email{jeremiah.mccarthy@mtu.ie}
\subjclass[2000]{46L53,81R50}

\keywords{Compact quantum group, quantum permutation group}
\begin{abstract}
In this exposition of quantum permutation groups, an alternative to the `Gelfand picture' of compact quantum groups is proposed. This point of view is inspired by algebraic quantum mechanics and interprets states of an algebra of continuous functions
on a quantum permutation group as quantum permutations. This interpretation allows talk of an \emph{element} of a   quantum permutation group, and allows a clear understanding of the difference between deterministic, random, and quantum permutations. The interpretation is illustrated by various  quantum permutation group phenomena.
\end{abstract}

\maketitle

\section*{Introduction}
In the world of noncommutative mathematics, $\mathrm{C}^*$-algebraic compact quantum groups as defined by Woronowicz \cite{woro2} have been around  for about 35 years. No different to many other topics in modern mathematics, it is difficult to have a happy introduction to the world of these quantum groups without a certain level of mathematical maturity, not only in terms of technical knowledge, but also the marrying of this technical knowledge with an easiness with abstraction. A rudimentary acquaintance with quantum physics won't hinder either.   When the conceptual leap is  made, from classical compact groups with commutative algebras of continuous functions, to compact \emph{quantum} groups with \emph{noncommutative} algebras of continuous functions, and with the understanding
for the first time that  a compact quantum group with a noncommutative algebra of continuous functions is ``virtual'', not a set at all, let alone a group, compact quantum groups can be enjoyed as beautiful, intriguing, and mysterious entities.

\bigskip

With for example the quantum Peter--Weyl theorem and the Haar state, compact quantum groups are beautiful in how elegantly their theory generalises that of compact groups. With the emergence of ``infinite''  quantum generalisations $S_N^+$ of $S_N$ for $N\geq 4$, and non-Kac compact quantum groups where the ``inverse'' appears non-involutive, intriguing in how the theory differs. The theory has matured greatly; today a lot of new results for compact quantum groups are written in the more technically demanding language of the locally compact quantum groups of Kustermans and Vaes \cite{KV}, and the theory is in the foothills of having applications to quantum information by way of the theory of quantum automorphism groups of finite graphs (see for example \cite{lupini}).

\bigskip

Through all of this, the mystery of the virtual nature of quantum groups remains. The first strike against this conceptual barrier is to deploy a formal notation, the \emph{Gelfand picture}, that makes sense whenever the object in question is classical. Rather than talking about a quantum group $A$, or, slightly better, an algebra of continuous functions $A$ on a compact quantum group, talk instead about an algebra of continuous functions, $C(\mathbb{G})$, on a compact quantum group $\mathbb{G}$. Or the algebra of regular functions $\mathcal{O}(\mathbb{G})$, or the algebra of (essentially) bounded measurable functions $L^{\infty}(\mathbb{G})$.

\bigskip

Is it possible to interpret compact quantum groups as being like compact groups beyond the correct but staid: ``they are generalisations of compact groups in the sense that compact quantum groups with commutative algebras of continuous functions can be identified with compact groups''? Do quantum group theorists hide  behind their beautiful and intriguing results about quantum groups an even deeper intuition for  these objects? If these deeper intuitions and interpretations exist, the author has not seen them written down anywhere in any detail.

\bigskip

 It could be argued that, like the notation emanating from the Gelfand picture, good intuition certainly helps the beginner, and possibly the expert too. The aim of this work is to present a good intuition/interpretation for the class of compact quantum groups known as \emph{quantum permutation groups}.   Unconventionally, it identifies a \emph{set}, the set of states of the algebra of functions, as the set of quantum permutations/elements of the quantum group.

\bigskip

This \emph{Gelfand--Birkhoff picture} requires a  leap to be made before ever `going quantum'. Pick up a fresh deck of $N$ cards in some known order and ``randomly'' shuffle the deck. The shuffle is distributed according to some probability $\nu$ in the set of probabilities on $S_{N}$, $M_p(S_N)$. Without turning over the cards, i.e. making some measurements, it can not be said exactly what permutation acted on the deck. The leap here is to not just consider as permutations the \emph{deterministic permutations} in $S_N$, but also the \emph{random permutations} in $M_p(S_N)$ (which includes via the Dirac measures the deterministic permutations). Bilinearly extending the group law to $M_p(S_N)$, gives the \emph{random group law}, the convolution
$$(\nu_2\star \nu_1)(\{\sigma\})=\sum_{\tau\in S_N}\nu_2(\{\sigma\tau^{-1}\})\nu_1(\{\tau\}).$$ The Dirac measure concentrated at the identity is an identity for the random group law. Furthermore the inverse can be extended to a map $\operatorname{inv}:M_p(S_N)\to M_p(S_N)$, which gives the inverse of a Dirac measure for the random group law. Precisely because the map $\operatorname{inv}$ is not an inverse on the whole of $M_p(S_N)$, the set of random permutations   does not form a group, but it is nonetheless a monoid whose elements can be well-interpreted, understood, and studied in their own right. Note that where $F(S_N)$ is the algebra of complex-valued functions on $S_N$, $M_p(S_N)$ is the subset of positive functionals of norm one on $F(S_N)$: the set of \emph{states} of $F(S_N)$.

\bigskip

Once this leap is made, that the elements of $M_p(S_N)$ can be studied as ``permutations'', it isn't so difficult to leap to quantum permutation groups $S_N^+$, where a state on an algebra $C(S_N^+)$ defining the quantum permutation group can be interpreted as a ``permutation'', a \emph{quantum permutation}. What makes the interpretation cogent is the choice to interpret the generators $u_{ij}$ of $C(S_N^+)$ as relating to the states of $C(S_N^+)$ precisely as the generators $\mathds{1}_{j\to i}(\sigma)=\delta_{i,\sigma(j)}$ of the complex-valued functions on $S_N$ relate to the states of $F(S_N)$. That is, as Bernoulli random variables with distribution
$$\mathbb{P}[\mathds{1}_{j\to i}=1\,|\,\nu]=\nu(\mathds{1}_{j\to i})\qquad (\nu\in M_p(S_N)).$$
Furthermore, given a random permutation $\nu\in M_p(S_N)$, this is precisely the probability that it maps $j\to i$. Use the notation thus:
\begin{equation}\mathbb{P}[\nu(j)=i]:=\nu(\mathds{1}_{j\to i});\label{eqp}\end{equation}
with some probability, the random permutation $\nu$ maps $j\to i$.

 \bigskip

 A big question here: does an exposition of an interpretation comprise mathematics? It can be claimed that it is at least \emph{of} mathematics; quoting William Thurston \cite{WT}:
\begin{quote}
\emph{This question brings to the fore something that is fundamental and pervasive:
that what we} [mathematicians] \emph{are doing is finding ways for people to understand and think about
mathematics}.
\end{quote}

 Please note that no claim of originality is made: the work is exposition of well-established theory from a certain point of view. Neither does the work comprise a survey (comprehensive or otherwise).  Those interested in learning more about compact quantum groups in general can consult the original papers of Woronowicz \cite{woro1,woro2}, with exposition/survey well served by the lecture notes (see the web) of Banica, Franz--Skalski--So{\l}tan, Freslon, Skalski, Weber, and Vergnioux. Overarching references are Timmermann \cite{Timm} and Neshveyev--Tuset \cite{NS}. For those interested in quantum permutations specifically, see the original paper of Wang \cite{Wang}, the survey of Banica--Bichon--Collins \cite{BBC1}, and the tome of Banica \cite{TeoTome}.   It would be remiss not to give a few references that experts have said came to mind when it was communicated that an expository piece on intuition/interpretation for quantum permutation groups was being worked on: see \cite{ATS,BEVW,CM,cleve,lupini,musto,RS,soltan}.
 
 \bigskip
 
 As a final introductory remark, with the help of the Borel functional calculus, the ideas contained here can also be extended to the case of the quantum orthogonal and unitary groups, $O_N^+$ and $U_N^+$. The approach here exploits an action $S_N^+\curvearrowright\{1,2,\dots,N\}$: doing the same for $O_N^+$ and $U_N^+$ exploits  actions on the real and complex spheres.

 \bigskip

The paper is organised as follows. In Section 1 the conventional Gelfand picture of $\mathrm{C}^*$-algebras is outlined together with a very brief overview of the theory of $\mathrm{C}^*$-algebraic compact quantum groups. Section 2 introduces the state-space-as-quantum-space \emph{Gelfand--Birkhoff picture}, and motivates this by introducing the rudiments of quantum probability and measurement, including the \emph{Born rule}, \emph{sequential measurement}, and \emph{wave function collapse}. In Section 3, following a layperson's motivation of what a quantum permutation should be, the quantum permutation groups of Wang and their subgroups are introduced, along with some basic properties. Section 4 starts with more focussed discussion of the Gelfand--Birkhoff picture, and makes this cogent by introducing the \emph{Birkhoff slice} (essentially the  extension of (\ref{eqp}) to \emph{quantum permutations}). This gives enough intuition to inspire the ``simplest yet'' proof of no quantum permutations on three symbols, as well as a lucid understanding of how deterministic and random permutations sit in a quantum permutation group. In Section 5 the convolution of states is defined as the quantum group law, and the counit and antipode understood on this level. The duals of discrete groups that are finitely-generated by elements of finite order, understood as \emph{abelian} with respect to the quantum group law, are studied in more depth as quantum permutation groups. Finally in Section 6 some intrigue: an exploration of some of the phenomena that occur once the commutative world of classical groups is left.

\section{Compact quantum groups}
In 1995, Alain Connes posed the question: \emph{What is the quantum automorphism group of a space}? For the case of finite spaces, this question was answered in 1998 by Shuzou Wang \cite{Wang}. There are two main ways of defining this quantum automorphism group but first some noncommutative terminology/philosophy is required.
\subsection{The Gelfand picture}
The prevailing point of view in the study of  compact quantum groups is to employ what could be called the \emph{Gelfand picture}. This starts with a categorical equivalence given by Gelfand's Theorem:
$$\text{compact Hausdorff spaces}\simeq (\text{unital commutative $\mathrm{C}^*$-algebras})^{\text{op}}.$$
Starting with a compact Hausdorff space $X$, the algebra of continuous functions on $X$, $C(X)$, is a unital commutative $\mathrm{C}^*$-algebra; and starting with a unital commutative $\mathrm{C}^*$-algebra $A$, the spectrum, $\Omega(A)$, the set of \emph{characters}, non-zero *-homomorphisms $A\to \mathbb{C}$, is a compact Hausdorff space such that $A\cong C(\Omega(A))$. Therefore a general unital commutative $\mathrm{C}^*$-algebra can be denoted $A=C(X)$. Inspired by this, one can define the category of `compact quantum spaces' as
$$\text{compact quantum spaces}:\simeq (\text{unital $\mathrm{C}^*$-algebras})^{\text{op}}.$$
In analogy with the commutative case, a general not-necessarily commutative unital $\mathrm{C}^*$-algebra can be denoted $A=C(\mathbb{X})$, the unit $I_A=:\mathds{1}_{\mathbb{X}}$, $C(\mathbb{X})$ be called an algebra of continuous functions on the quantum space $\mathbb{X}$, and  $\mathbb{X}$ referred to as the spectrum of $A$. However, in the Gelfand picture, $\mathbb{X}$ is not a set any more but a so-called \emph{virtual} object, only spoken about via its algebra of continuous functions.

\bigskip

Some basic knowledge of $\mathrm{C}^*$-algebras is required here. See \cite{Murph} for further details on the below. The set of states,  $\mathcal{S}(C(\mathbb{X}))$, is the set of positive linear functionals $C(\mathbb{X})\to \mathbb{C}$ of norm one.  A state $\varphi\in \mathcal{S}(C(\mathbb{X}))$ is \emph{pure} if it has the property  that whenever $\rho$ is a positive linear functional such that $\rho\leq \varphi$, necessarily there exists $t\in [0,1]$ such that $\rho=t\varphi$.  Otherwise $\varphi$ is \emph{mixed}. Elements of the form $g^*g\in C(\mathbb{X})$ are \emph{positive}, and for positive $f\in C(\mathbb{X})$ there exists a (pure) state $\varphi$ such $\varphi(f)=\|f\|$. Let $\pi(C(\mathbb{X}))\subseteq B(\mathsf{H})$ be a unital representation.  For non-zero $\xi\in\mathsf{H}$ and $\hat{\xi}=\xi/\|\xi\|$,
$$\varphi_\xi(f)=\langle\hat{\xi},\pi(f)\hat{\xi}\rangle,$$
defines a state on $C(\mathbb{X})$ called a \emph{vector state}. The \emph{GNS construction} $\pi_\varphi(C(\mathbb{X}))\subseteq B(\mathsf{H}_\varphi)$ gives norm one $\xi_\varphi\in \mathsf{H}_\varphi$ such that
\begin{equation}\varphi(f)=\langle \xi_\varphi,\pi_\varphi(f)\xi_\varphi\rangle.\label{Born}\end{equation}
Therefore  all states are vector states for some representation.

\bigskip

An element $p\in C(\mathbb{X})$ is a \emph{projection} if $p=p^*=p^2$. For $f\in C(\mathbb{X})$ define $|f|^2=f^*f$. If $p_1,p_2,\dots,p_n\in C(\mathbb{X})$ are projections,
\begin{equation}|p_np_{n-1}\cdots p_2p_1|^2=p_1p_2\cdots p_{n-1}p_np_{n-1}\cdots p_2p_1.\label{abs}\end{equation}
This work will consider (finite) \emph{partitions of unity}, (finite) sets of projections $\{p_i\}_{i=1}^n\subset C(\mathbb{X})$ such that
$$\sum_{i=1}^np_i=\mathds{1}_{\mathbb{X}}.$$
Necessarily elements of partitions of unity are pairwise orthogonal, $p_ip_j=\delta_{i,j}p_i$.
If $C(\mathbb{X})$ is finite dimensional it will be denoted by $F(\mathbb{X})$, the algebra of functions on a finite quantum space $\mathbb{X}$, in this case  isomorphic to a multi-matrix algebra:
$$F(\mathbb{X})\cong \bigoplus_{i=1}^mM_{N_i}(\mathbb{C}).$$
 If $F(\mathbb{X})$ is commutative, then $\mathbb{X}=:X$ is a finite set, $F(X)$ is the algebra of all complex valued functions on it, isomorphic to $\mathbb{C}^N$, equivalently the diagonal subalgebra of $M_{N}(\mathbb{C})$.

\subsection{Compact Quantum Groups}\label{Quantum Groups}
 In the well-established setting of $\mathrm{C}^*$-algebraic compact quantum groups, it is through the Gelfand picture that one speaks of a quantum group $\mathbb{G}$, through a unital noncommutative $\mathrm{C}^*$-algebra $A$ that is considered an algebra of continuous functions on it: $A=C(\mathbb{G})$. If $X$ and $Y$ are compact topological spaces, then, where $\otimes$ is the minimal tensor product:
$$C(X\times Y)\cong C(X)\otimes C(Y).$$
Let $S$ be a compact semigroup. Using the above isomorphism, the transpose of the continuous multiplication $m:S\times S\to S$ is a $*$-homomorphism, the \emph{comultiplication}:
$$\Delta:C(S)\to  C(S)\otimes C(S).$$
The associativity of the multiplication gives \emph{coassociativity} to the comultiplication:
\begin{equation}(\Delta\otimes I_{C(S)})\circ \Delta=(I_{C(S)}\otimes \Delta)\circ \Delta.\label{coass}\end{equation}
If $C(S)$ satisfies \emph{Baaj--Skandalis cancellation}
   $$\overline{\Delta(C(S))(\mathds{1}_S\otimes C(S))}=\overline{\Delta(C(S))(C(S)\otimes \mathds{1}_S)}=C(S)\otimes C(S);$$
  then $S$ has cancellation, and a compact semigroup with cancellation is a group. In this sense Baaj--Skandalis cancellation is a $C(S)$-analogue of cancellation. This inspires a definition which took its first form in Baaj and Skandalis \cite{BaS}, with refinements from Woronowicz \cite{woro2}, and finally Van Daele \cite{VaD}:

\begin{definition}
\textbf{An} \emph{algebra of continuous functions on a ($\mathrm{C}^*$-algebraic) compact quantum group} $\mathbb{G}$ is a unital $\mathrm{C}^*$-algebra $C(\mathbb{G})$ together with a unital $*$-morphism $\Delta:C(\mathbb{G})\to C(\mathbb{G})\otimes C(\mathbb{G})$ that satisfies coassociativity and \emph{Baaj--Skandalis cancellation}:
 $$\overline{\Delta(C(\mathbb{G}))(\mathds{1}_{\mathbb{G}}\otimes C(\mathbb{G}))}=\overline{\Delta(C(\mathbb{G}))(C(\mathbb{G})\otimes \mathds{1}_{\mathbb{G}})}=C(\mathbb{G})\otimes C(\mathbb{G}).$$
  If $C(\mathbb{G})$ is finite dimensional, $\mathbb{G}$ is said to be a \emph{finite quantum group}, and $F(\mathbb{G})$ written for $C(\mathbb{G})$.
\end{definition}
Algebras of continuous functions on compact quantum groups come with a dense Hopf $*$-algebra $\mathcal{O}(\mathbb{G})$ of \emph{regular functions.} Hopf $*$-algebras are $*$-algebras which satisfy axioms which are precisely  $F(G)$-analogues of the (finite) group axioms. The $F(G)$-analogue of the group law, is \emph{comultiplication} $\Delta:F(G)\to F(G)\otimes_{\text{alg.}} F(G)$; the $F(G)$-analogue of the (inclusion of the) identity is the \emph{counit} $\varepsilon:F(G)\to \mathbb{C}$, $\varepsilon(f):=\operatorname{ev}_e(f)=f(e)$; and the $F(G)$-analogue of the inverse is an antihomomorphism called the \emph{antipode}, $S:F(G)\to F(G)$, $Sf(\sigma)=f(\sigma^{-1})$. A most leisurely introduction to how these maps, and the $F(G)$-analogues of associativity (\emph{coassociativity}, (\ref{coass})), of the identity axiom (the \emph{counital property}), and of the inverse axiom (the \emph{antipodal property}), are $F(G)$-analogues of the (finite) group axioms is given in Section 1.1, \cite{DS}.

\bigskip

Note the stress on \emph{an}: the algebra of regular functions can have more than one completion. There is a maximal, \emph{universal} completion $C_u(\mathbb{G})$, and a minimal, \emph{reduced} completion $C_r(\mathbb{G})$. In this sense, a compact quantum group can be identified  with non-isomorphic algebras of continuous functions $C_\alpha(\mathbb{G})$ and $C_\beta(\mathbb{G})$ if their dense algebras of regular functions are isomorphic as Hopf $*$-algebras. Non-isomorphic algebras of continuous functions can arise for the dual of a discrete group $\Gamma$. The dual $\widehat{\Gamma}$ is a compact quantum group, with algebra of regular functions given by the group ring, $\mathcal{O}(\widehat{\Gamma}):=\mathbb{C}\Gamma$. All the completions of $\mathcal{O}(\widehat{\Gamma})$ are canonically isomorphic exactly when $\Gamma$ is amenable: when all the completions of the algebra of regular functions on a compact quantum group $\mathcal{O}(\mathbb{G})$ are canonically isomorphic, in particular $C_u(\mathbb{G})\cong C_r(\mathbb{G})$, the compact quantum group is said to be \emph{coamenable}.

\bigskip

What makes a compact quantum group a generalisation of a compact group? Consider a commutative algebra of functions on a compact quantum group, $A$. Gelfand's Theorem states $A\cong C(\Omega(A))$, the unital $*$-homomorphism $\Delta:C(\Omega(A))\to C(\Omega(A)\times \Omega(A))$ gives a continuous map $m:\Omega(A)\times \Omega(A)\to \Omega(A)$, the coassociativity of $\Delta$ gives associativity to $m$, and so $(\Omega(A),m)$ is a compact semigroup with Baaj--Skandalis cancellation, and so a group. That compact quantum groups with commutative algebras of continuous functions are in fact (classical) compact groups is Gelfand duality: that the virtual quantum objects are still studied through their algebra of functions is the essence of the Gelfand picture.

\bigskip

A particular class of compact quantum group, earlier defined by Woronowicz \cite{woro1}, is a \emph{compact matrix quantum group}.
\begin{definition}
If a compact quantum group $\mathbb{G}$ is such that
\begin{itemize}
\item $C(\mathbb{G})$ is generated  by the entries of a unitary matrix $u\in M_N(C(\mathbb{G}))$, and
\item $u$ and $u^t$ are invertible, and
\item $\Delta:C(\mathbb{G})\to C(\mathbb{G})\otimes C(\mathbb{G})$, $u_{ij}\mapsto \sum_{k=1}^Nu_{ik}\otimes u_{kj}$ is a $*$-homomorphism,
\end{itemize}
then $\mathbb{G}$ is a \emph{compact matrix quantum group} with \emph{fundamental representation} $u\in M_N(C(\mathbb{G}))$. \end{definition}
\begin{theorem}(Woronowicz)\label{commmat}
If a compact matrix quantum group $G$ has commutative algebra of functions $C(G)$, then $G$ is homeomorphic to a closed compact subgroup of the unitary group, $U_N$.\end{theorem}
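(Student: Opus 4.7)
The plan is to combine the Gelfand duality story already developed in the excerpt with the specific structure that a fundamental representation provides. By hypothesis $C(G)$ is commutative, so by Gelfand's theorem and the discussion at the end of Section 1.2, the spectrum $G := \Omega(C(G))$ is a compact Hausdorff space that inherits from the coassociative comultiplication $\Delta$ the structure of a compact semigroup with cancellation, hence a compact group. This identifies the ``virtual'' $G$ with a genuine compact group; the remaining task is to produce a topological group embedding $\Phi:G\hookrightarrow U_N$.

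The natural candidate is
\[
\Phi:G\to M_N(\mathbb{C}),\qquad \chi\mapsto \bigl(\chi(u_{ij})\bigr)_{i,j=1}^N,
\]
where $u=(u_{ij})$ is the fundamental unitary. First I would check that $\Phi(\chi)$ actually lies in $U_N$: applying the character $\chi$ (a $*$-homomorphism) to the relations $\sum_k u_{ki}^*u_{kj}=\delta_{ij}\mathds{1}_G=\sum_k u_{ik}u_{jk}^*$ that encode unitarity of $u\in M_N(C(G))$ immediately gives $\Phi(\chi)^*\Phi(\chi)=I=\Phi(\chi)\Phi(\chi)^*$. Second, I would verify that $\Phi$ is a group homomorphism: the product of characters $\chi_1,\chi_2$ in $G=\Omega(C(G))$ is the convolution $(\chi_1\otimes\chi_2)\circ\Delta$, so the defining property $\Delta(u_{ij})=\sum_k u_{ik}\otimes u_{kj}$ yields $\Phi(\chi_1\chi_2)_{ij}=\sum_k\chi_1(u_{ik})\chi_2(u_{kj})=(\Phi(\chi_1)\Phi(\chi_2))_{ij}$. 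Continuity of $\Phi$ is automatic because $G$ carries the weak-$*$ topology inherited from $C(G)^*$, under which each evaluation $\chi\mapsto\chi(u_{ij})$ is continuous.

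Injectivity is the step that uses the full strength of the compact matrix quantum group hypothesis. Because the $u_{ij}$ generate $C(G)$ as a $\mathrm{C}^*$-algebra, the $*$-subalgebra they generate is norm-dense. Characters are automatically norm-continuous, so two characters agreeing on every $u_{ij}$ agree on this dense subalgebra and hence on all of $C(G)$; thus $\Phi$ is injective. Finally, a continuous injection from a compact space $G$ into a Hausdorff space $U_N$ is a homeomorphism onto its image, and that image is compact, hence closed in $U_N$, so $\Phi(G)$ is a closed compact subgroup of $U_N$ and $G\cong\Phi(G)$.

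The routine parts are the algebraic checks of unitarity and multiplicativity; the only place one must be careful is the ``generation'' clause in the definition of a compact matrix quantum group. One has to remember that this means generation as a $\mathrm{C}^*$-algebra (not merely as a $*$-algebra), so that the density argument underlying injectivity of $\Phi$ works via the continuity of characters rather than via any explicit polynomial formula. Once this is pinned down, nothing else in the argument requires more than the universal property of Gelfand duality applied to the comultiplication.
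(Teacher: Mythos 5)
Your argument is correct, and it follows exactly the route the paper indicates: the paper does not prove this theorem itself but cites Woronowicz and notes in Section 4.1 that the proof uses the restriction of the map $\Phi:\chi\mapsto(\chi(u_{ij}))_{i,j=1}^N$ to the character space $\Omega(C(G))$, which is precisely your embedding. All the individual steps (unitarity of $\Phi(\chi)$ from applying a $*$-character to the unitarity relations, multiplicativity from $\Delta(u_{ij})=\sum_k u_{ik}\otimes u_{kj}$, injectivity via norm-continuity of characters on the dense generated $*$-subalgebra, and the compact-to-Hausdorff homeomorphism argument) check out.
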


\bigskip

The quantum groups studied in this work are all compact matrix quantum groups such that $u$, the \emph{fundamental representation}, is a \emph{magic unitary}. That is the rows and columns of $u$ are partitions of unity:
$$\sum_{k=1}^Nu_{ik}=\mathds{1}_{\mathbb{G}}=\sum_{k=1}^Nu_{kj}.$$
Such compact quantum groups are called \emph{quantum permutation groups}. There are finite quantum groups which are not quantum permutation groups \cite{BBN}.
\section{Quantum Mathematics} \label{somequantummech}
The preceding section outlines the conventional view of compact quantum groups. The Gelfand picture allows nominal talk of a compact quantum group as an object, but in general does not permit the consideration of an \emph{element} of a compact quantum group. Aspects of quantum mechanics can be used to inspire a way of doing this for quantum permutation groups.

\bigskip

In his book Weaver \cite{Weaver} states and argues the point that:
\begin{quote}
\emph{The fundamental idea of mathematical quantisation is that sets are replaced by Hilbert spaces... [and] the quantum version of a [real]-valued function on a set is a [self-adjoint] operator on a Hilbert space.}
\end{quote}
Weaver attributes the Hilbert-space-as-set point of view to Birkhoff and von Neumann \cite{BirkvonN}, and the operator-as-function point of view to Mackey \cite{Mackey}. In this picture, the elements of the projective version of a Hilbert space $P(\mathsf{H})$ form a quantum space, and the self-adjoint operators are random variables $P(\mathsf{H})\to \mathbb{R}$, with  the Born rule providing probability, and spectral projections providing wave function collapse. Call this the \emph{Birkhoff picture}.

\bigskip

Inspired by algebraic quantum mechanics \cite{Landsman}, and its descendent quantum probability (as seen in e.g. \cite{Maasen}, rather than the free probability of Voiculescu), this work will push on slightly, and instead define the compact quantum space associated to a $\mathrm{C}^*$-algebra to be the \emph{set} of \emph{states} on the algebra.  Call this the \emph{Gelfand--Birkhoff picture}.  This is a well worn path outside the field of compact quantum groups, see \cite{Landsman2,Landsman3} for discussion and further references.

\bigskip

 To illustrate, consider a state $\varphi$ on a unital $\mathrm{C}^*$-algebra $C(\mathbb{X})$, the self-adjoint elements of which are called \emph{observables}. Consider a projection, a \emph{Bernoulli observable}, $p\in C(\mathbb{X})$. Associated to $p$ are two events: $p=1$ given by $p^1:=p$, and $p=0$ given by $p^0:=\mathds{1}_{\mathbb{X}}-p$. The distribution of $p$ given the state $\varphi$ is given by (essentially the Born rule by (\ref{Born})):
$$\mathbb{P}[p=\theta\,|\,\varphi]:=\varphi(p^{\theta}).$$
If the event $p=\theta$ is non-null, $\mathbb{P}[p=\theta\,|\,\varphi]>0$, and the measurement of $\varphi$ with $p$ gives $p=\theta$,  the state transitions $\varphi\mapsto \widetilde{p^\theta}\varphi$ where for $f\in C(\mathbb{X})$:
$$\widetilde{p^{\theta}}\varphi(f):=\frac{\varphi(p^\theta fp^{\theta})}{\varphi(p^{\theta})}.$$
This is \emph{wave function collapse}, or \emph{state conditioning}.  If $\varphi_\xi$ is a vector state given by a representation $\pi(C(\mathbb{X}))\subseteq B(\mathsf{H}$), and $\varphi_{\xi}(p^{\theta})>0$ (so that  $\pi(p^{\theta})\neq 0$), then $\widetilde{p^\theta}\varphi_{\xi}$ is also a vector state, given by $\varphi_{\pi(p^{\theta})\xi}$. To see this use the fact that $\pi(p^{\theta})$ is a projection:
\begin{align*}
\widetilde{p^{\theta}}\varphi_{\xi}(f)&=\frac{\varphi_\xi(p^\theta fp^\theta)}{\varphi_\xi(p^\theta)}= \frac{\langle\xi,\pi(p^{\theta}fp^{\theta})\xi\rangle}{\langle\xi,\pi(p^{\theta})\xi\rangle}=\frac{\langle\pi(p^{\theta})\xi,\pi(f)\pi(p^{\theta})\xi\rangle}{\langle\pi(p^{\theta})\xi,\pi(p^{\theta})\xi\rangle}\\
&=\frac{\langle\pi(p^{\theta})\xi,\pi(f)\pi(p^{\theta})\xi\rangle}{\|\pi(p^{\theta})\xi\|^2}=\left\langle\frac{\pi(p^{\theta})\xi}{\|\pi(p^{\theta})\xi\|},\pi(f)\frac{\pi(p^{\theta})\xi}{\|\pi(p^{\theta})\xi\|}\right\rangle=\varphi_{\pi(p^{\theta})\xi}(f).
\end{align*}

Take another projection $q\in C(\mathbb{X})$. Suppose that the event $p=\theta_1$ has been observed so that the state is now $\widetilde{p^{\theta_1}}\varphi$. The probability that measurement \emph{now} produces $q=\theta_2$, and $\widetilde{p^{\theta_1}}\varphi\mapsto \widetilde{q^{\theta_2}}\widetilde{p^{\theta_1}}\varphi$, is:
$$\mathbb{P}[q=\theta_2\,|\,\widetilde{p^{\theta_1}}\varphi]:=\widetilde{p^{\theta_1}}\varphi(q^{\theta_2})=\frac{\varphi(p^{\theta^1}q^{\theta_2}p^{\theta_1})}{\varphi(p^{\theta_1})}.$$
Define now the event $\left([q=\theta_2]\succ [p=\theta_1]\,|\,\varphi\right)$, said `given the state $\varphi$, $q$ is measured to be $\theta_2$ \emph{after} $p$ is measured to be $\theta_1$'. Using the expression above a probability can be ascribed to this event:
\begin{align*}
\mathbb{P}\left[[q=\theta_2]\succ [p=\theta_1]\,|\,\varphi\right]&:= \mathbb{P}[q=\theta_2\,|\,\widetilde{p^{\theta_1}}(\varphi)]\cdot\mathbb{P}[p=\theta_1\,|\,\varphi]
\\&= \frac{\varphi(p^{\theta_1}q^{\theta_2}p^{\theta_1})}{\varphi(p^{\theta_1})}\cdot \varphi(p^{\theta_1})\cdot=\varphi(p^{\theta_1}q^{\theta_2}p^{\theta_1})=\varphi(|q^{\theta_2}p^{\theta_1}|^2).
\end{align*}
Inductively, for a finite sequence of projections $(p_i)_{i=1}^n$, and $\theta_i\in\{0,1\}$:
$$\mathbb{P}\left[[p_n=\theta_n]\succ\cdots \succ[p_1=\theta_1]\,|\,\varphi\right]=\varphi(|p_n^{\theta_n}\cdots p_1^{\theta_1}|^2).$$
It is worth noting that
\begin{equation}
\mathbb{P}[[p_2=\theta_2]\succ [p_1=\theta_1]|\varphi]\leq \mathbb{P}[p_1=\theta_1|\varphi],\label{check}
\end{equation}
so that in particular if $\mathbb{P}[[p_2=\theta_2]\succ [p_1=\theta_1]|\varphi]>0$ then $\mathbb{P}[p_1=\theta_1|\varphi]>0$.

\bigskip

In general, $pq\neq qp$ and so
$$\mathbb{P}\left[[q=\theta_2]\succ [p=\theta_1]\,|\,\varphi\right]\neq \mathbb{P}\left[[p=\theta_1]\succ [q=\theta_1]\,|\,\varphi\right],$$
and this is to be interpreted that $q$ and $p$ are not simultaneously observable. However the \emph{sequential projection measurement} $q\succ p$  is an `observable' in the sense that it is a random variable with values in $\{0,1\}^2$. Inductively, the sequential projection measurement $p_n\succ \cdots\succ p_1$ is a $\{0,1\}^n$-valued random variable.

\bigskip

 If $p$ and $q$ \emph{do} commute, then the distributions of $q\succ p$ and $p\succ q$ are equal in the sense that
$$\mathbb{P}\left[[q=\theta_2]\succ [p=\theta_1]\,|\,\varphi\right]=\varphi(|q^{\theta_2}p^{\theta_1}|^2)=\mathbb{P}\left[[p=\theta_1]\succ [q=\theta_2]\,|\,\varphi\right];$$
it doesn't matter what order they are measured in, the outputs of the measurements can be multiplied together, and this observable can be called $pq=qp$.

\bigskip

Measurement with a projection includes an \emph{a priori} distribution, and  wave function collapse: but these are not purely quantum mechanical phenomena, and occur also with measurements from a commutative subalgebra $C(X)\subseteq C(\mathbb{X})$. To illustrate, let $X=\{x_1,\dots,x_N\}$ and consider the diagonal subalgebra $F(X)$ of $F(\mathbb{X}):=M_N(\mathbb{C})$. Subsets $Y\subseteq X$ yield subspaces $F(Y)\subseteq F(X)$ together with projections $p_Y\in F(\mathbb{X})$. The \emph{a priori} distribution of $p_Y$ is:
$$\mathbb{P}[p_Y=\theta\,|\,\varphi]=\varphi(p_Y^\theta),$$
and, conditional on $p_Y=\theta$, there is wave function collapse to $\widetilde{p_Y^\theta}\varphi\in \mathcal{S}(F(X))$.

\bigskip

A defining difference between classical and quantum measurement is the quantum phenomenon of projection observables that cannot be simultaneously measured in the sense that $(p\succ q)\neq (q\succ p)$. In classical measurement, all projection observables can be simultaneously measured, and this implies that while classical measurement \emph{can} disturb a mixed state, the effects are purely probabilistic, capturing a decrease in uncertainty about the state. In the finite, classical case of $F(X)$, measurement with an appropriate sequence of classical projection measurements results in collapse to a pure state $\operatorname{ev}_{x_i}\in \mathcal{S}(F(X))$. Pure states of the diagonal subalgebra are invariant under wave function collapse: further measurement does \emph{not} disturb the state. In contrast, for any state on $F(\mathbb{X}):=M_N(\mathbb{C})$ there is a projection $p\in F(\mathbb{X})$ that can disturb it, and so collapse to complete certainty is impossible.

\bigskip

Sequential measurement of finite spectrum observables $f\in C(\mathbb{X})$ can also be considered. Through the inverse of the isometric Gelfand--Naimark *-isomorphism, a finite spectrum observable $f\in C(\mathbb{G})$ has a spectral decomposition $f=\sum_{i=1}^{|\sigma(f)|}f_i\,p^{f_i}$, that defines a partition of unity $\{p^{f_i}\}_{i=1,\dots, |\sigma(f)|}\subset C(\mathbb{X})$, and
\begin{equation}\mathbb{P}[f=f_i\,|\,\varphi]=\varphi(p^{f_i}).\label{spectral}\end{equation}
Furthermore the expectation of $f$ can be defined:
$$\mathbb{E}[f\,|\,\varphi]:=\varphi(f).$$

For continuous spectrum observables, by passing to the enveloping von Neumann algebra $C(\mathbb{X})^{**}\cong \pi_U(C(\mathbb{X}))''$, which will be denoted $\ell^{\infty}(\mathbb{X})$, and taking the normal extension of $\varphi$ to a state $\omega_\varphi$ on $\ell^{\infty}(\mathbb{X})$, Borel functional calculus can be used to measure, for example, if $f$ is in some Borel subset of its spectrum, via the projection $\mathds{1}_{S}(f)\in \ell^{\infty}(\mathbb{X})$, so that $$\mathbb{P}[f\in S\,|\,\varphi]:=\mathbb{P}[\mathds{1}_{S}(f)=1\,|\,\varphi]:=\omega_\varphi(\mathds{1}_{S}(f)).$$
If $f\in S$ is non-null, for wave function collapse, embed functions $f\in C(\mathbb{X})$ via $\imath:C(\mathbb{X})\hookrightarrow \ell^{\infty}(\mathbb{X})$, and the state transitions to $\varphi\mapsto \widetilde{\mathds{1}_{S}(f)}\varphi$ defined by:
$$\widetilde{\mathds{1}_{S}(f)}\varphi(g)=\frac{\omega_\varphi(\mathds{1}_{S}(f)\imath(g)\mathds{1}_{S}(f))}{\omega_\varphi(\mathds{1}_{S}(f))}.$$
Although not considered in this work, for not-necessarily finite spectrum observables $(f_i)_{i=1}^n$, the distribution of the sequential measurement $f_n\succ \cdots \succ f_1$ could be defined for Borel sets $S_i\subseteq \sigma(f_i)$:
$$\mathbb{P}[(f_n\succ \cdots\succ f_1)\in (S_n,\cdots,S_i)|\varphi]=\omega_\varphi(|\mathds{1}_{S_n}(f_n)\cdots \mathds{1}_{S_1}(f_1)|^2).$$

\section{Quantum permutations}
Fresh decks of playing cards produced by e.g. the US Playing Card Company always come in the same original order:
$$A\spadesuit,\dots, K\spadesuit,A\clubsuit,\dots,K\clubsuit,A\diamondsuit,\dots,K\diamondsuit,A\heartsuit,\dots,K\heartsuit.$$
Respectively enumerate using $c:\{1,2,\dots, 52\}\to \{A\spadesuit,\dots,K\heartsuit\}$. The original order can be associated with the pure state   $\operatorname{ev}_e\in M_p(S_{52})$. After a suitably randomised shuffle, an active permutation, the deck will be in some unknown order given by a mixed state, a \emph{random permutation} $\nu\in M_p(S_{52})$, with the card in position $j$ moved to position $\nu(j)$. Suppose the card in position $i$ is turned over to reveal card $c(j)$. This observable, denoted $x^{-1}(i)\in F(S_{52})$, reveals that the random permutation sent $j$ to $i$. This observable has spectrum $\sigma(x^{-1}(i))=\{1,2,\dots,52\}$, and thus spectral decomposition
$$x^{-1}(i)=\sum_{k=1}^{52}k\,v_{ik},$$
with $\{v_{ik}\}_{k=1,\dots,52}$ a partition of unity.
The distribution of $x^{-1}(i)$ given the state $\nu\in M_p(S_{52})$ can be denoted
\begin{equation}\mathbb{P}[\nu^{-1}(i)=j]:=\mathbb{P}[x^{-1}(i)=j\,|\,\nu]=\nu(v_{ij}).\label{uij}\end{equation}
Each card $c(j)$ must be mapped \emph{somewhere} and so, for all $\nu\in M_p(S_{52})$
$$\sum_{k=1}^{52}\mathbb{P}[\nu^{-1}(k)=j]=\nu\left(\sum_{k=1}^{52}v_{kj}\right)=1,$$
this implies that $\{v_{kj}\}_{k=1,\dots,52}$ is also a partition of unity, giving another observable
$$x(j):=\sum_{k=1}^{52}k\,v_{kj},$$
and note that
$$\mathbb{P}[\nu(j)=i]:=\mathbb{P}[x(j)=i\,|\,\nu]=\nu(v_{ij})=\mathbb{P}[\nu^{-1}(i)=j].$$
The observable $x^{-1}(i)$ is measured by turning over the card in position $i$. How is $x(j)$ measured? Go back to the deck in the original order, turn card $c(j)$ face \emph{up}, and shuffle with $\nu$: the position of card $c(j)$ after the shuffle is $x(j)$.

\bigskip

Following the sequential measurement
$$x^{-1}(51)\succ \cdots\succ x^{-1}(2)\succ x^{-1}(1),$$
the random permutation will collapse to a (deterministic) permutation $\operatorname{ev}_{\sigma}\in M_p(S_{52})$. If the sequential measurement is paused, say at $\ell<51$ with
$$(x^{-1}(\ell)\succ \cdots\succ x^{-1}(2)\succ x^{-1}(1))=(j_\ell,\dots,j_2,j_1),$$
then the state has collapsed to
$$\nu_\ell:=\widetilde{v_{ij_{\ell}}}\cdots \widetilde{v_{ij_{2}}}\widetilde{v_{ij_{1}}}\nu.$$
Of course
$$\mathbb{P}[\nu_\ell(k)=j]:=\mathbb{P}[x^{-1}(k)=j_k\,|\,\nu_\ell]=\delta_{j,j_k},$$
that is once a card $c(k)$ is observed in the position $j_k$ once, that is determined once and for all.

\bigskip

 Note that $(v_{ij})_{i,j=1}^{52}\in M_{52}(F(S_{52}))$ is a \emph{magic unitary}.

\bigskip

There is no issue whatsoever talking about the set of random permutations, $M_p(S_N)$, nor an element of this set $\nu\in M_p(S_N)$. Inspired by the Gelfand--Birkhoff picture, imagine for a moment that the same can be done for \emph{quantum permutations}: imagine that there is a $\mathrm{C}^*$-algebra $C(S_N^+)$  such that the set of quantum permutations on $N$ symbols is given by the state space, and a quantum permutation is simply an element  of the state space.

\bigskip

What would make a permutation \emph{quantum}? In light of previous discussions perhaps what might make a permutation \emph{quantum} is that quantum versions of observables $x^{-1}(i)$ and $x(j)$ be \emph{not} simultaneously observable. This implies that, with a deck of cards shuffled with a quantum permutation, once the first card has been revealed, the observation of the second card might disturb the state in a such a way that non-classical events \emph{can} occur. What would be a non-classical event: turning over the first card to reveal the ace of hearts, then turning over the second card to reveal an ace of spaces, then turning over the first card again to find it is not longer the ace of hearts but the ace of diamonds:
$$\left[\varphi^{-1}(1)=c^{-1}(A\diamondsuit)\right]\succ \left[\varphi^{-1}(2)=c^{-1}(A\spadesuit)\right]\succ \left[\varphi^{-1}(1)=c^{-1}(A\heartsuit)\right].$$

\bigskip

With the deck in the original order, $x(j)$ would be measured by turning  card $c^{-1}(j)$ face \emph{up}, shuffling with $\varphi$, and noting the position of $c^{-1}(j)$ after the shuffle. Similarly $x^{-1}(i)$ would be observable by revealing the card in position $i$. What would not be permitted would be shuffling with more than one card face up, or revealing more than one card at once.

\bigskip

Given a quantum permutation $\varphi\in \mathcal{S}(C(S_N^+))$, similarly to before, the spectral decompositions of the observables $x^{-1}(i)$ and $x(j)$ should give a magic unitary, denoted now by $(u_{ij})_{i,j=1}^{52}\in M_{52}(C(S_N^+))$. Denote as before
$$\mathbb{P}[\varphi(j)=i]:=\mathbb{P}[x(j)=i\,|\,\varphi]=\varphi(u_{ij}).$$
The projective nature of wave function collapse, that conditional on $\varphi(j)=i$, $\varphi\mapsto \widetilde{u_{ij}}\varphi$, implies that
$$\mathbb{P}[[\varphi(j)=i]\succ[\varphi(j)=i]]=\mathbb{P}[\varphi(j)=i];$$
and also that the probability of observing $\varphi(j)=i$ after (just) observing $\varphi(j)=i$ is one.

\bigskip

In the sequel this will all be made mathematically precise.

\subsection{Wang's Quantum Permutation Groups}\label{Wang}
  In a survey article, Banica, Bichon and Collins \cite{BBC1} attribute to Brown \cite{Brown} the idea of taking a matrix group $G\subseteq U_N$, realising $C(G)$ as a universal commutative $\mathrm{C}^*$-algebra generated by the matrix coordinates $u_{ij}\in C(G)$ subject to some relations $R$, and then studying, if it exists, the noncommutative universal $\mathrm{C}^*$-algebra generated by abstract variables $u_{ij}$ subject to the same relations $R$. This procedure, later called \emph{liberation} in the context of compact quantum groups by Banica and Speicher \cite{BS}, was carried out by Wang to create quantum versions of the orthogonal and unitary groups, and later  quantum permutation groups.

\bigskip

Let $F(S_N)$ be the algebra of complex functions on $S_N$ with basis $\{\delta_\sigma\}_{\sigma\in S_N}$.  Define $\mathds{1}_{j\to i}\in F(S_N)$ by:
$$\mathds{1}_{j\to i}(\sigma):=\begin{cases}
    1, & \mbox{if } \sigma(j)=i, \\
    0, & \mbox{otherwise}.
  \end{cases}$$
Where $\Delta$ is the transpose of the group law $m:S_{N}\times S_{N}\to S_{N}$, so that $\Delta(f)=f\circ m$, and employing $F(S_N\times S_N)\cong F(S_N)\otimes_{\text{alg.}} F(S_N)$, note that $$\Delta(\mathds{1}_{j\to i})=\sum_{k=1}^N\mathds{1}_{k\to i}\otimes_{\text{alg.}} \mathds{1}_{j\to k}.$$
Furthermore
\begin{equation}\delta_\sigma=\prod_{j=1}^{N}\mathds{1}_{j\to \sigma(j)},\label{del}\end{equation}
and so the matrix $u=(\mathds{1}_{j\to i})_{i,j=1}^N$ is a unitary with inverse the transpose of $u$, whose entries generate $F(S_N)$. Therefore $F(S_N)$ is a commutative algebra of functions on a compact matrix quantum group. Furthermore the $\mathds{1}_{j\to i}$ are projections, and
$$\sum_{k=1}^N\mathds{1}_{k\to i}=\mathds{1}_{S_N}=\sum_{k=1}^N\mathds{1}_{j\to k}.$$
Therefore the matrix $u=(\mathds{1}_{j\to i})_{i,j=1}^N$ is a magic unitary. Indeed $F(S_N)$ has a presentation as a universal commutative $\mathrm{C}^*$-algebra:
$$F(S_N)\cong \mathrm{C}^*_{\text{comm}}(u_{ij}\,|\, u \text{ an $N\times N$ magic unitary}).$$
Following Wang \cite{Wang}, \emph{liberate} by considering the universal $\mathrm{C}^*$-algebra:
$$C(S_N^+):=\mathrm{C}^*(u_{ij}\,|\, u \text{ an $N\times N$ magic unitary}).$$
The universal property says that if $(v_{ij})_{i,j=1}^N$ is another $N\times N$ magic unitary, then $u_{ij}\mapsto v_{ij}$ is a $*$-homomorphism. It can be shown that
$$\left[\sum_{k=1}^Nu_{ik}\otimes u_{kj}\right]_{i,j=1}^N\in M_N(C(S_N^+)\otimes C(S_N^+))$$
is a magic unitary, and thus $\Delta(u_{ij})=\sum_{k=1}^Nu_{ik}\otimes u_{kj}$ is a $*$-homomorphism. It is straightforward to show that $\Delta$ is unital and coassociative, and so $C(S_N^+)$ is an algebra of continuous functions on a compact matrix quantum group, \emph{the} quantum permutation group on $N$ symbols.
\subsection{Quantum Permutation Groups}\label{subqpg} If $\mathbb{G}$ is a compact matrix quantum group whose fundamental representation is a  magic unitary, then the universal property gives $\pi: C(S_N^+)\to C(\mathbb{G})$ a surjective $*$-homomorphism that intertwines the comultiplication:
\begin{equation}\Delta_{C(\mathbb{G})}\circ \pi=(\pi\otimes \pi)\circ \Delta_{C(S_N^+)},\label{subgroup}\end{equation}
which is to say that $\mathbb{G}\subseteq S_N^+$, that is $\mathbb{G}$ is a quantum subgroup of $S_N^+$. Furthermore, if $\mathbb{G}\subseteq S_N^+$ by a comultiplication-intertwining surjective $*$-homomorphism $\pi_1:C(S_N^+)\to C(\mathbb{G})$, then $[\pi_1(u_{ij})]_{i,j=1}^N$ is a magic unitary that is a fundamental representation for $\mathbb{G}$.
\begin{definition}
  A \emph{quantum permutation group $\mathbb{G}$} is a compact matrix quantum group whose fundamental representation is a magic unitary. The notation
$$\mathbb{G}\subseteq S_N^+$$
implies a fixed fundamental magic representation $u\in M_N(C(\mathbb{G}))$, and subsequently $u_{ij}$ referring to a generator of $C(\mathbb{G})$ rather than of $C(S_N^+)$.
\end{definition}

\bigskip

For a quantum permutation group $\mathbb{G}\subseteq S_N^+$, on the algebra of regular functions $\mathcal{O}(\mathbb{G})$, also generated (as a $*$-algebra) by $u_{ij}\in \mathcal{O}(\mathbb{G})$, the comultiplication, counit, and antipodal maps are given by:
\begin{align}
  \Delta(u_{ij}) & =\sum_{k=1}^Nu_{ik}\otimes_{\text{alg.}} u_{kj}, \nonumber\\
  \varepsilon(u_{ij}) & =\delta_{i,j}, \label{counit}\\
  S(u_{ij}) & =u_{ji}.\nonumber
\end{align}
In general the counit does not extend to a character on a completion $C_\alpha(\mathbb{G})$ of $\mathcal{O}(\mathbb{G})$. The antipode satisfies $S^2=I_{\mathcal{O}(\mathbb{G})}$ so that $\mathcal{\mathbb{G}}$ is a \emph{Kac algebra}.

\bigskip

 The justification for calling $S_N^+$ \emph{the} quantum permutation group on $N$ symbols goes beyond the liberation of $F(S_N)$. Wang originally defined the (universal) quantum automorphism group of $\mathbb{C}^N$ (that leaves the counting measure invariant). This leads to the definition of $S_N^+$ given above. This work should further cement that $S_N^+$ is a quantum generalisation of $S_N$.
\begin{theorem}
For $N\leq 3$, $C(S_N^+)\cong F(S_N)$.
\end{theorem}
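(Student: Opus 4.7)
The plan is to prove, for each $N\in\{1,2,3\}$ separately, that the relations defining $C(S_N^+)$ already force the generators $u_{ij}$ to commute. Once commutativity is established, the two universal presentations
$$C(S_N^+)=\mathrm{C}^*(u_{ij}\,|\,u \text{ an }N\times N \text{ magic unitary}),\quad F(S_N)\cong\mathrm{C}^*_{\text{comm}}(u_{ij}\,|\,u \text{ an }N\times N\text{ magic unitary})$$
agree: the identity map on the generators is a $*$-isomorphism by the universal property. So everything reduces to proving commutativity.

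The cases $N=1,2$ are immediate. For $N=1$ there is only one magic unitary, $u_{11}=\mathds{1}$. For $N=2$, the row/column sum relations force the matrix to have the form $\bigl(\begin{smallmatrix}p & \mathds{1}-p\\ \mathds{1}-p & p\end{smallmatrix}\bigr)$ for a single projection $p$, so $C(S_2^+)\cong\mathrm{C}^*(p=p^*=p^2)\cong\mathbb{C}^2\cong F(S_2)$.

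The main work is $N=3$. Orthogonality within rows and columns already gives $u_{ij}u_{kl}=0=u_{kl}u_{ij}$ whenever $i=k$ or $j=l$ (but $(i,j)\neq(k,l)$), so it is enough to treat the case $i\neq k$, $j\neq l$, and by symmetry of the relations under permuting rows and columns, it suffices to show $u_{11}u_{22}=u_{22}u_{11}$. The plan is a two-step computation. First, use column 2 and then row 3 to rewrite
$$u_{11}u_{22}=u_{11}(\mathds{1}-u_{12}-u_{32})=u_{11}-u_{11}u_{32}=u_{11}-u_{11}(\mathds{1}-u_{31}-u_{33})=u_{11}u_{33},$$
using $u_{11}u_{12}=0$ (row 1) and $u_{11}u_{31}=0$ (column 1). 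Second, expand $\mathds{1}-u_{11}=u_{12}+u_{13}$ and compute
$$u_{11}u_{22}(\mathds{1}-u_{11})=u_{11}u_{22}u_{12}+u_{11}u_{22}u_{13}=0+u_{11}u_{33}u_{13}=0,$$
using $u_{22}u_{12}=0$ (column 2), the identity $u_{11}u_{22}=u_{11}u_{33}$ just established, and $u_{33}u_{13}=0$ (column 3). Hence $u_{11}u_{22}=u_{11}u_{22}u_{11}$, which is manifestly self-adjoint, so $u_{11}u_{22}=(u_{11}u_{22})^*=u_{22}u_{11}$.

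The hard part is exactly the $N=3$ case: the magic-unitary axioms are rather weak, and it is not a priori clear that they alone force commutativity. The decisive point is that when $N=3$, writing $\mathds{1}-u_{11}=u_{12}+u_{13}$ produces only two summands, each of which can be annihilated by $u_{11}u_{22}$ (one via column 2, the other via the trick $u_{11}u_{22}=u_{11}u_{33}$ and column 3). This is precisely the phenomenon that collapses for $N\geq 4$, where $\mathds{1}-u_{11}$ has extra summands $u_{14},\ldots,u_{1N}$ that can no longer be absorbed—in line with Wang's theorem that $C(S_N^+)$ is noncommutative for $N\geq 4$.
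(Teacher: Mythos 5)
Your proof is correct, and its core is the same as the paper's: reduce to showing $u_{11}u_{22}=u_{22}u_{11}$ in the universal algebra, and extract the key identity $u_{11}u_{22}=u_{11}u_{33}$ from the magic unitary relations by playing a row sum off against a column sum. Where you diverge is in how the argument is closed. The paper completes a symmetric chain of three such identities,
$$u_{11}u_{22}=u_{11}u_{33}=u_{22}u_{33}=u_{22}u_{11},$$
each obtained by the same row-versus-column trick (e.g. $(u_{11}+u_{21}+u_{31})u_{33}=(u_{21}+u_{22}+u_{23})u_{33}$ for the middle step). You instead stop after the first identity and show $u_{11}u_{22}(\mathds{1}-u_{11})=0$, so that $u_{11}u_{22}=u_{11}u_{22}u_{11}$ is self-adjoint and commutativity follows by taking adjoints. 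Both are two or three lines; the paper's version is arguably more symmetric and transparent, while yours makes explicit the structural reason the argument dies at $N=4$ (the expansion of $\mathds{1}-u_{11}$ acquires summands that cannot be absorbed). You also supply details the paper leaves implicit: the trivial cases $N=1,2$, the identification of the commutative universal algebra with $F(S_N)$, and the reduction to a single commutator via the row/column permutation symmetry of the relations. All of these steps are sound.
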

See Section \ref{Birk} for a new proof for $N=3$.
\begin{theorem}\label{infinitenc}
  For $N\geq 4$, $C(S_N^+)$ is noncommutative and infinite dimensional.
\end{theorem}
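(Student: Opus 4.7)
My approach is to reduce to the case $N = 4$ via a quantum subgroup argument and then construct an explicit infinite-dimensional noncommutative quotient of $C(S_4^+)$.

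\emph{Reduction $N \geq 5 \Rightarrow N = 4$.} Letting $u$ denote the fundamental magic representation of $S_4^+$, I would form the block $N \times N$ matrix
$$
w = \begin{pmatrix} u & 0 \\ 0 & I_{N-4} \end{pmatrix} \in M_N(C(S_4^+)),
$$
where the bottom-right block has entries $\delta_{ij}\,\mathds{1}_{S_4^+}$. The rows and columns of $w$ are manifestly partitions of unity in $C(S_4^+)$, so the universal property of $C(S_N^+)$ yields a $*$-homomorphism $C(S_N^+) \to C(S_4^+)$ sending the $(i,j)$-generator to $w_{ij}$. This map is surjective, since its image contains each generator $u_{ij}$ of $C(S_4^+)$. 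Hence the theorem for general $N \geq 4$ follows from the case $N = 4$.

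\emph{Case $N = 4$.} The plan is to exhibit a unital $*$-homomorphism from $C(S_4^+)$ onto an infinite-dimensional noncommutative $\mathrm{C}^*$-algebra. Pick a Hilbert space $\mathsf{H}$ and projections $p, q \in B(\mathsf{H})$ whose generated unital $\mathrm{C}^*$-subalgebra $B := \mathrm{C}^*(\mathds{1}_{\mathsf{H}}, p, q)$ is infinite-dimensional and noncommutative. A convenient source is the group $\mathrm{C}^*$-algebra of the infinite dihedral group $\mathbb{Z}_2 * \mathbb{Z}_2$: if $a, b$ are its order-two generators, then $p := (\mathds{1} + a)/2$ and $q := (\mathds{1} + b)/2$ are projections that generate $C^*(\mathbb{Z}_2 * \mathbb{Z}_2)$, which is infinite-dimensional (it contains $C^*(\mathbb{Z})$ as a subalgebra, since $\langle ab \rangle \cong \mathbb{Z}$) and noncommutative (since $\mathbb{Z}_2 * \mathbb{Z}_2$ is nonabelian). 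Now form the block-diagonal matrix
$$
v = \begin{pmatrix} p & \mathds{1}_{\mathsf{H}}-p & 0 & 0 \\ \mathds{1}_{\mathsf{H}}-p & p & 0 & 0 \\ 0 & 0 & q & \mathds{1}_{\mathsf{H}}-q \\ 0 & 0 & \mathds{1}_{\mathsf{H}}-q & q \end{pmatrix} \in M_4(B),
$$
whose rows and columns are visibly partitions of unity in $B$; thus $v$ is a magic unitary. The universal property of $C(S_4^+)$ then yields a $*$-homomorphism $\pi: C(S_4^+) \to B$ with $\pi(u_{ij}) = v_{ij}$. Since $\pi(u_{11}) = p$ and $\pi(u_{33}) = q$ jointly generate $B$, the map $\pi$ is surjective, and $B$ being infinite-dimensional and noncommutative forces the same for $C(S_4^+)$.

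The main substantive input is the existence of two projections whose generated unital $\mathrm{C}^*$-algebra is infinite-dimensional and noncommutative; once the infinite dihedral (or any analogous free-product) picture is adopted, everything else is routine. The remainder is merely an application of the universal property of $C(S_N^+)$ together with the direct verification that the block matrices $w$ and $v$ above are magic unitaries, neither of which presents any difficulty.
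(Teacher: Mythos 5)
Your proof is correct and takes essentially the same route as the paper: the paper also exhibits $\widehat{D_\infty}\subset S_4^+$ via exactly the block magic unitary built from $p=(e+a)/2$ and $q=(e+b)/2$ for the order-two generators $a,b$ of $D_\infty=\mathbb{Z}_2\ast\mathbb{Z}_2$, and extends to $N>4$ by padding with $\operatorname{diag}(u,\mathds{1},\dots,\mathds{1})$. The only cosmetic difference is that the paper cites the identification of $C^*(D_\infty)$ with the universal $\mathrm{C}^*$-algebra generated by two projections, whereas you argue infinite-dimensionality directly from $\langle ab\rangle\cong\mathbb{Z}$; both are fine.
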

\begin{proof}
The standard argument for $N=4$ uses the universal $\mathrm{C}^*$-algebra generated by two projections (see \cite{BBC1}). To be slightly more concrete, consider the  infinite dihedral group
$$D_{\infty}:=\langle a,b\,|\,a^2=b^2=e\rangle. $$
The infinite dihedral group is amenable, which implies that the reduced and universal group $\mathrm{C}^*$-algebras coincide. Denote the (noncommutative) group ring by $\mathbb{C}D_{\infty}$ and $C(\widehat{D_\infty})$ \emph{the} $\mathrm{C}^*$-completion, which is in fact *-isomorphic to the universal $\mathrm{C}^*$-algebra generated by two projections \cite{Raeburn}. Together with $\Delta(g)=g\otimes g$, the dual $\widehat{D_{\infty}}$ is a compact matrix quantum group, with unit $\mathds{1}_{\widehat{D_{\infty}}}:=e$, and  fundamental representation $\operatorname{diag}(a,b)$.

\bigskip

In fact $\widehat{D_{\infty}}\subset S_4^+$: where $p:=(e+a)/2$ and $q:=(e+b)/2$, via $a=u_{11}-u_{21}$ (and similarly for $b$) the following is a magic fundamental representation for $\widehat{D_{\infty}}$:
\begin{equation}u:=\left[\begin{array}{cccc}
                          p & e-p & 0 & 0 \\
                          e-p & p & 0 & 0 \\
                          0 & 0 & q & e-q \\
                          0 & 0 & e-q & q
                        \end{array}\right].\label{pqmag}\end{equation}
Therefore $\widehat{D_\infty}\subset S_4^+$, and it follows that $C(S_4^+)$ is infinite dimensional and noncommutative.

\bigskip

 To extend to $N\geq 4$ use $\widehat{D_\infty}\subset S_{4+\ell}^+$ via $\operatorname{diag}(u,\mathds{1}_{\widehat{D_{\infty}}},\cdots,\mathds{1}_{\widehat{D_{\infty}}})$.
\end{proof}
Note also that replacing $a,b\in D_\infty$ with order two generators of $D_N$ shows that $\widehat{D_N}\subset S_4^+$ (exhibiting Th. 1.1 (9), \cite{4Pts}). Showing that $\widehat{D_3}\subset S_4^+$  is the easiest way of showing that $C(S_4^+)$ is noncommutative.

\bigskip

\begin{theorem}\label{coamenable}
  For $N\geq 5$, $S_N^+$ is not coamenable.
\end{theorem}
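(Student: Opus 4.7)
The plan is to exhibit, for each $N \geq 5$, a non-amenable discrete group $\Gamma$ together with a quantum-subgroup inclusion $\widehat{\Gamma} \subseteq S_N^+$, and then invoke two standard facts: (a) for any discrete group $\Gamma$, the dual $\widehat{\Gamma}$ is coamenable if and only if $\Gamma$ is amenable; and (b) coamenability is inherited by closed quantum subgroups. Both facts are standard in the theory of compact quantum groups and can be cited from \cite{NS}.

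The main construction mimics that of $\widehat{D_\infty} \subseteq S_4^+$ from the proof of Theorem \ref{infinitenc}. Take $\Gamma := \mathbb{Z}_2 * \mathbb{Z}_3 \cong \operatorname{PSL}(2,\mathbb{Z})$, which contains a non-abelian free subgroup and is therefore non-amenable. Write $a,b$ for the generators of $\mathbb{Z}_2, \mathbb{Z}_3$ inside the free product, set $p := (e+a)/2$, and for $k = 0, 1, 2$ set $p_k := \tfrac{1}{3}(e + \omega^{-k}b + \omega^{-2k}b^2)$ with $\omega := e^{2\pi i /3}$; the $p_k$ form a partition of unity of pairwise orthogonal projections inside $C(\widehat{\mathbb{Z}_3})$. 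The desired magic fundamental representation for $\widehat{\Gamma}$ is the $5 \times 5$ block diagonal
\[ u \;:=\; \operatorname{diag}(u_a, u_b) \;\in\; M_5(C(\widehat{\Gamma})), \]
where $u_a$ is the $2 \times 2$ block of (\ref{pqmag}) built from $p$, and $u_b$ is the $3 \times 3$ circulant whose $(i,j)$-entry is $p_{(i-j) \bmod 3}$. Both blocks are magic, and the generators of $\mathbb{C}\Gamma$ are recoverable via $a = u_{11} - u_{21}$ and $b = \sum_{k} \omega^{k} p_{k}$. A direct computation, analogous to the one verified for $a$ in the proof of Theorem \ref{infinitenc}, confirms that the comultiplication $\Delta u_{ij} = \sum_k u_{ik} \otimes u_{kj}$ restricts on $\mathbb{C}\Gamma \subseteq C(\widehat{\Gamma})$ to the group-like coproduct $g \mapsto g \otimes g$, so $\widehat{\Gamma} \subseteq S_5^+$. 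To extend to arbitrary $N \geq 5$, pad $u$ with $(N-5)$ further diagonal copies of $\mathds{1}_{\widehat{\Gamma}}$, exactly as at the end of the proof of Theorem \ref{infinitenc}.

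Combining the ingredients: since $\Gamma$ is non-amenable, (a) gives that $\widehat{\Gamma}$ is not coamenable, and (b) then forces $S_N^+$ to not be coamenable for every $N \geq 5$. The main obstacle I anticipate is the clean invocation of (b): the intuition, that a surjective coproduct-intertwining $*$-morphism $C_u(\mathbb{G}) \twoheadrightarrow C_u(\mathbb{H})$ should transport an extension of the counit along to $\mathbb{H}$, is correct, but the reduced-algebra bookkeeping required to make this rigorous is non-trivial and best delegated to a reference. The magic unitary construction itself is a mild variation of (\ref{pqmag}) and introduces no new difficulty.
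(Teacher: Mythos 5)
Your proposal is correct and follows essentially the same route as the paper: the paper also embeds $\widehat{\mathbb{Z}_3 \ast \mathbb{Z}_2} \subset S_5^+$ via a block-diagonal magic unitary consisting of a $2\times 2$ block built from the order-two generator and a $3\times 3$ circulant built from the order-three generator, then combines non-amenability of $\mathbb{Z}_3\ast\mathbb{Z}_2$ with the facts that duals of non-amenable discrete groups are non-coamenable and that coamenability passes to quantum subgroups (the paper cites Tomatsu \cite{Salmi} for the latter), padding with copies of the unit for $N>5$. The only differences are cosmetic (labelling of the generators and the ordering of the blocks).
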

\begin{proof}
The standard argument that $S_N^+$ is not coamenable for $N\geq 5$ uses fusion rules \cite{genericcoaction}. However, in similar spirit to the (standard) proof of Theorem \ref{infinitenc}, using the fact that a compact subgroup of a coamenable compact quantum group is coamenable \cite{Salmi}, the exhibition of a non-coamenable subgroup of $S_5^+$ proves Theorem \ref{coamenable} for $N=5$ (the extension to $N>5$ follows in the same way as the extension of Theorem \ref{infinitenc} to $N>4$). Let $a$ and $b$ be the respective generators of $\mathbb{Z}_3\ast \mathbb{Z}_2$. Let $C(\widehat{\mathbb{Z}_3\ast \mathbb{Z}_2})$ be \emph{a} completion of $\mathbb{C}(\mathbb{Z}_3\ast \mathbb{Z}_2)$ to a compact quantum group. Where $\omega=e^{2\pi i/3}$, consider the following magic unitary:
$$u^{a}:=\frac{1}{3}\left[\begin{array}{ccc}
                                                 e+a+a^2 & e+\omega^2a+\omega a^2 & e+\omega a+\omega^2 a^2 \\
                                                 e+\omega a+\omega^2 a^2 & e+a+a^2 & e+\omega^2 a+\omega a^2 \\
                                                 e+\omega^2 a+\omega a^2 & e+\omega a+\omega^2 a^2 & e+a+a^2
                                               \end{array}\right]$$
Note that $a=u^{a}_{11}+\omega^2 u^{a}_{21}+\omega u^{a}_{31}$.
With the same notation $q=(e+b)/2$ as with the infinite dihedral group, except obviously with $e\in \mathbb{Z}_3\ast \mathbb{Z}_2$, let
$$u^{b}:=\left[\begin{array}{cc}
                                                 q & e-q \\
                                                 e-q & q
                                               \end{array}\right].$$

Consider the block magic unitary $\operatorname{diag}(u^a,u^b)$. This shows that $\widehat{\mathbb{Z}_3\ast \mathbb{Z}_2}\subset S^+_5$. The dual of a discrete group is coamenable if and only if the group is amenable; $\mathbb{Z}_3\ast \mathbb{Z}_2$ is not amenable \cite{Raeburn}, therefore its dual is not coamenable, and thus neither is $S^+_5$.
\end{proof}

Banica \cite{TeoTome} calls $\widehat{\mathbb{Z}_3\ast \mathbb{Z}_2}$ by Bichon's group dual subgroup of $S_5^+$. More on duals in Section \ref{duals}.

\bigskip

It is the case that $S_N\subseteq S_N^+$ is a quantum permutation group, known to be maximal for $N\leq 5$ \cite{Easiness}, but conjectured to be maximal for all $N\in\mathbb{N}$. One motivation for the current work is to perhaps provide some intuition to attack such a problem. See Section \ref{randomwalks} for more.

\section{The Gelfand-Birkhoff picture for quantum permutations}
\begin{definition}\label{defqp1}
Where $C(\mathbb{G})$ is an algebra of continuous functions on a quantum permutation group, \emph{a quantum permutation} is an element of the state space, $\mathcal{S}(C(\mathbb{G}))$
\end{definition}
 With the use of the \emph{Birkhoff slice} (Section \ref{Birk}) this statement will be made cogent, and, as will be seen in Section \ref{law}, there is a natural candidate for what should be considered a \emph{quantum group law} $\mathcal{S}(C(\mathbb{G}))\times \mathcal{S}(C(\mathbb{G}))\to \mathcal{S}(C(\mathbb{G}))$.

Returning to Gelfand's Theorem: as soon as an algebra of functions $C(\mathbb{G})$ is noncommutative, it is often remarked that it obviously cannot be the algebra of functions on a compact Hausdorff space (with the commutative pointwise multiplication). However the elements of $C(\mathbb{G})$ viewed through the lens of Kadison's function representation are affine functions on a compact Hausdorff space.

\bigskip

 Note firstly that $\mathcal{S}(C(\mathbb{G}))$ is a weak-* compact Hausdorff space \cite{Murph}, and  recall the embedding of $C(\mathbb{G})$ into the enveloping von Neumann algebra $\imath:C(\mathbb{G})\hookrightarrow \ell^{\infty}(\mathbb{G})$, $f\mapsto \imath(f)$:
$$\imath(f)(\rho):=\rho(f)\qquad (\rho\in C(\mathbb{G})^*).$$
Finally weak*-convergence of a net of states $\varphi_\lambda\to \varphi$, that for $f\in C(\mathbb{G})$
$$\varphi_\lambda(f)\to \varphi(f),$$
gives   continuity to $\imath(f)$:
$$\imath(f)(\varphi_\lambda)\to \imath(f)(\varphi).$$
The multiplication  $\imath(C(\mathbb{G}))\times \imath(C(\mathbb{G}))\to \imath(C(\mathbb{G}))$ is not the pointwise multiplication, but inherited from $C(\mathbb{G})$:
$$\imath(f)\imath(g)=\imath(fg)\neq \imath(gf)=\imath(g)\imath(f).$$

\bigskip

Through this lens the  elements of $C(\mathbb{G})$ are affine functions $\mathcal{S}(C(\mathbb{G}))\to \mathbb{C}$, that is for $\rho_1,\rho_2\in \mathcal{S}(C(\mathbb{G}))$, and $\lambda\in [0,1]$:
$$\imath(f)(\lambda\,\rho_1+(1-\lambda)\,\rho_2)=\lambda\,\imath(f)(\rho_1)+(1-\lambda)\,\imath(f)(\rho_2).$$
An element of $\imath(C(\mathbb{G}))\subsetneq C(\mathcal{S}(C(\mathbb{G})))$ is therefore completely determined by its values on the weak-$*$ closure of the set of pure states, the \emph{pure state space} $\mathcal{P}(C(\mathbb{G})):=\overline{PS(C(\mathbb{G}))}^{w^*}$.
 This is precisely how  a function on a finite group $f_0:G\to \mathbb{C}$ extends  to  an affine function on the set of \emph{random permutations} on $G$, $f_1:M_p(G)\to \mathbb{C}$:
$$f_1(\lambda\,\operatorname{ev}_{\sigma}+(1-\lambda)\,\operatorname{ev}_{\tau})=\lambda\,f_0(\sigma)+(1-\lambda)\,f_0(\tau).$$
While Gelfand's Theorem says, through the fact that the character space and pure state space coincide for unital commutative $\mathrm{C}^*$-algebras, that for a finite group $G$, the embedded $\imath(F(G))$ is the full algebra of continuous functions $F(\mathcal{P}(F(G)))$, in the noncommutative case, restricting even to $\mathcal{P}(C(\mathbb{G}))$, $\imath(C(\mathbb{G}))$ is a proper subset of $C(\mathcal{S}(C(\mathbb{G})))$, so, while tempting,  it is abuse of notation to define $\mathbb{G}:=\mathcal{P}(C(\mathbb{G}))$ as the compact Hausdorff space, and continue to use the $C(\mathbb{G})$ notation. If in the classical case the algebra of functions, $F(G)$ is understood as the algebra of affine functions on the random permutations $M_p(G)\to \mathbb{C}$, and an algebra of continuous functions on a quantum permutation group $C(\mathbb{G})$ is understood as an algebra of affine functions $\mathcal{S}(C(\mathbb{G}))\to \mathbb{C}$, then the relationship between $\mathcal{P}(C(\mathbb{G}))$ and $\mathcal{S}(C(\mathbb{G}))$ reflects in the quantum case the relationship between  $G$ and $M_p(G)$ in the classical case.

\bigskip

These analogies are well captured by the following schematic:
\begin{center}
\begin{tikzcd}
\text{Deterministic} \arrow[d, hook] \arrow[rr, hook] &  & \text{Random} \arrow[d, hook] \\
\text{Pure Quantum} \arrow[rr, hook]                  &  & \text{Quantum}
\end{tikzcd}
\end{center}
The objects on the left are pure states of $\mathrm{C}^*$-algebras; while the objects on the right are mixed states. The objects on top are states of commutative algebras; while the objects on the bottom are states of noncommutative algebras. The focus of this work is on the mixed states. This pair of dichotomies is discussed in \cite{HSO}.

\bigskip

Therefore with this focus, the set of quantum permutations will be denoted by $\mathbb{G}:=\mathcal{S}(C(\mathbb{G}))$,  \emph{a} quantum permutation written an element $\varphi\in \mathbb{G}$, but the $C(\mathbb{G})$ notation will be kept, but with the implicit understanding that it is a proper subset of $C(\mathcal{S}(C(\mathbb{G})))$ (not to mention the fact that for  non-coamenable $\mathbb{G}$ there are different $\mathrm{C}^*$-completions  of $\mathcal{O}(\mathbb{G})$, and thus different state spaces). 

\subsection{The Birkhoff Slice}\label{Birk}
Given a quantum permutation group $\mathbb{G}\subseteq S_N^+$, via the Gelfand--Birkhoff picture, an element $\varphi\in \mathbb{G}$ is a quantum permutation. In this picture, the projections $u_{ij}$ are Bernoulli observables. Make the following interpretation:
\begin{equation}\label{quantpermint}
  \mathbb{P}[\varphi(j)=i]:=\mathbb{P}[u_{ij}=1\,|\,\varphi]:=\varphi(u_{ij}).
\end{equation}
These probabilities can be collected in a matrix:
$$\Phi(\varphi)_{ij}:=\varphi(u_{ij}).$$
 That $u$ is a magic unitary implies that $\Phi(\varphi)$ is a
 doubly stochastic matrix, i.e. $\Phi(\varphi)$ is in the Birkhoff polytope $\mathcal{B}_N$, and call the map $\Phi:\mathbb{G}\to \mathcal{B}_N$ the \emph{Birkhoff slice}. It is called a slice as it only captures an ephemeral aspect of a quantum permutation; and is not injective.

  \bigskip

  In the case of compact matrix quantum groups, there is a natural generalisation of the Birkhoff slice, $\Phi:\mathcal{S}(C(\mathbb{G}))\to M_N(\mathbb{C})$. The restriction of this map to characters, an injective map $\Phi:\Omega(C(\mathbb{G}))\to M_N(\mathbb{C})$, has been studied previously. Immediately Woronowicz uses this map to prove Theorem \ref{commmat} \cite{woro1}. Kalantar and Neufang \cite{KN}, who associate to a (locally) compact quantum group $\mathbb{G}$, a (locally) compact classical group $\tilde{\mathbb{G}}$, use the map to show that in the case of a compact matrix quantum group, $\tilde{\mathbb{G}}$ is homeomorphic to $\Phi(\Omega(C(\mathbb{G})))$.

  \bigskip

 Assuming that $\mathbb{P}[\varphi(k)=\ell]\neq 0$, the quantum permutation $\varphi$ can be conditioned on $\varphi(k)=\ell$, and conditional probabilities   collected in a Birkhoff slice. Recall state conditioning:
\begin{align*}\widetilde{u_{\ell k}}(\varphi)&:=\frac{\varphi(u_{\ell k}\cdot u_{\ell k})}{\varphi(u_{\ell k})}
\\ \implies  \Phi(\widetilde{u_{\ell k}}(\varphi))&=\left[\frac{\varphi(u_{\ell k}u_{ij} u_{\ell k})}{\varphi(u_{\ell k})}\right]_{i,j=1}^N
\\ &=:\left[\mathbb{P}[\varphi(j)=i\,|\,\varphi(k)=\ell \right]_{i,j=1}^N
\end{align*}
\begin{proposition}\label{BS1}
Let $\mathbb{G}\subseteq S_N^+$. For $\varphi\in \mathbb{G}$, if $\varphi(u_{ij})$ is non-zero, the matrix $\Phi(\widetilde{u_{ij}}\varphi)$ has a one in the $(i,j)$-th entry.
\end{proposition}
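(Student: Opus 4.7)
The plan is to unravel both pieces of notation at the $(i,j)$-th entry and use the single algebraic fact that $u_{ij}$ is a projection. By the definition of the Birkhoff slice,
\[
\Phi(\widetilde{u_{ij}}\varphi)_{ij} \;=\; \widetilde{u_{ij}}\varphi(u_{ij}),
\]
and by the definition of state conditioning this equals $\varphi(u_{ij}\cdot u_{ij}\cdot u_{ij})/\varphi(u_{ij})$, which is well-defined precisely because of the hypothesis $\varphi(u_{ij})\neq 0$.

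The key step is the observation that the entries of a magic unitary are projections, so $u_{ij}=u_{ij}^2=u_{ij}^3$. Substituting this into the numerator gives $\varphi(u_{ij})$, which cancels against the denominator to yield $1$.

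There is essentially no obstacle here: the statement is the ``collapse to certainty'' phenomenon already flagged in the discussion before the proposition, and the proof is a one-line verification once the definitions are expanded. The role of the proposition is conceptual rather than technical, namely to confirm that after observing $\varphi(j)=i$ the conditional probability of again observing $\varphi(j)=i$ is one, matching the projective character of wave-function collapse in Section \ref{somequantummech}.
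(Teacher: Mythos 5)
Your proof is correct and is exactly the verification the paper intends: the displayed formula for $\Phi(\widetilde{u_{\ell k}}\varphi)$ immediately preceding the proposition reduces the $(i,j)$-entry to $\varphi(u_{ij}^3)/\varphi(u_{ij})$, and the projection identity $u_{ij}^3=u_{ij}$ gives $1$. The paper leaves this one-line check implicit, so there is nothing to add.
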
 Proposition \ref{BS1}  implies that if e.g.
\begin{align*}
\Phi(\varphi)&=\left[\begin{array}{cccc}
                     \Phi(\varphi)_{11} & \Phi(\varphi)_{12} & \cdots & \Phi(\varphi)_{1N} \\
                     \Phi(\varphi)_{21} & \Phi(\varphi)_{22} & \cdots & \Phi(\varphi)_{2N} \\
                     \vdots & \vdots & \ddots & \vdots \\
                     \Phi(\varphi)_{N1} & \Phi(\varphi)_{N2} & \cdots & \Phi(\varphi)_{NN}
                   \end{array}\right]
                   \\ \implies \Phi(\widetilde{u_{22}}\varphi)&=\left[\begin{array}{ccccc}
                     \Phi(\widetilde{u_{22}}\varphi)_{11} & 0 & \Phi(\widetilde{u_{22}}\varphi)_{13} &  \cdots & \Phi(\widetilde{u_{22}}\varphi)_{1N} \\
                     0 & 1 & 0 &  \cdots & 0 \\
                     0 & 0 & \Phi(\widetilde{u_{22}}\varphi)_{33} & \cdots & \Phi(\widetilde{u_{22}}\varphi)_{3N} \\
                     \vdots & \vdots & \vdots & \ddots & \vdots \\
                     \Phi(\widetilde{u_{22}}\varphi)_{N1} & 0 & \Phi(\widetilde{u_{22}}\varphi)_{N3} & \cdots & \Phi(\widetilde{u_{22}}\varphi)_{NN}
                   \end{array}\right]\end{align*}

Inductively, assuming $\varphi(u_{i_nj_n}\cdots u_{i_1j_1})\neq 0$
$$\Phi(\widetilde{u_{i_nj_n}}\cdots \widetilde{u_{i_1j_1}}(\varphi))_{ij}=\mathbb{P}[\varphi(j)=i\,|\, [\varphi(j_n)=i_n]\succ\cdots\succ[\varphi(j_1)=i_1]].$$

Indeed
\begin{align*}
&\mathbb{P}[[\varphi(j)=i]\succ[\varphi(j_n)=i_n]\succ\cdots\succ[\varphi(j_1)=i_1]] =\varphi(|u_{ij}u_{i_nj_n}\cdots u_{i_1j_1}|^2)
\\ &=\Phi(\widetilde{u_{i_nj_n}}\cdots \widetilde{u_{i_1j_1}}(\varphi))_{ij}\cdot\Phi(\widetilde{u_{i_{n-1}j_{n-1}}}\cdots \widetilde{u_{i_1j_1}}(\varphi))_{i_nj_n}\cdots \Phi(\varphi)_{i_1j_1}.
\end{align*}
\begin{example}\emph{No quantum permutations on three symbols}
As a toy example of how the Gelfand-Birkhoff picture is a good intuition, consider the theorem that $S_3^+=S_3$. This is just to say that $C(S_3^+)$, the universal $\mathrm{C}^*$-algebra generated by a $3\times 3$ magic unitary $(u_{ij})_{i,j=1}^3$ is commutative. This was known by Wang \cite{Wang}, but Banica, Bichon, and Collins \cite{BBC1} describe the Fourier-type proof as ``quite tricky''. Lupini, Man\v{c}inska, and Roberson however give a more elementary proof \cite{lupini}.

\bigskip

By allowing talk of \emph{a} quantum permutation the Gelfand--Birkhoff picture  suggests \emph{why} there are no quantum permutations on three symbols. Without assuming $C(S_3^+)$ commutative, consider the observable
$$x(1)=u_{11}+2u_{21}+3u_{31},$$
which asks of a quantum permutation $\varphi\in S_3^+$ what it maps one to. Measure $\varphi$ with $x(1)$ and the denote the result by $\varphi(1)$. The intuition might be that as soon as  $\varphi(1)$ is known, $\varphi(2)$ and $\varphi(3)$ are entangled in the sense that measurement of $x(2)$ cannot be made without affecting $x(3)$ (but without affecting $x(1)$). This is only intuition:  it might still be possible to exhibit e.g. the non-classical event:
\begin{equation}[\varphi(3)=3]\succ [\varphi(2)=1]\succ [\varphi(1)=3];\label{imp}\end{equation}
but pausing before measuring $\varphi(2)$ allows the noting of a relationship between the events $[\varphi(2)=1]\succ [\varphi(1)=3]$ and $[\varphi(3)=2]\succ [\varphi(1)=3]$ that implies (\ref{imp}) cannot happen.

\bigskip

Suppose that $\varphi\in S_3^+$  and assume without loss of generality that  measuring $\varphi $ with $x(1)$ gives $\varphi(1)=3$ with non-zero probability $\varphi(u_{31})$, and the quantum permutation transitions to $\widetilde{u_{31}}\varphi\in S_3^+$.

 \bigskip

 Now consider,  using the fact that $u_{21}u_{31}=0=u_{32}u_{31}$, and the rows and columns of $u$ are partitions of unity:
\begin{align*}
u_{31}=(u_{12}+u_{22}+u_{32})u_{31}&=(u_{21}+u_{22}+u_{23})u_{31}
\\ \implies u_{12}u_{31}&=u_{23}u_{31}
\\ \implies u_{31}u_{12}&=u_{31}u_{23} ,
\end{align*}
by taking the adjoint of both sides. This implies that conditioning on $[\varphi(2)=1]\succ [\varphi(1)=3]$ is the same as conditioning on $[\varphi(3)=2]\succ [\varphi(1)=3]$:
\begin{equation}\widetilde{u_{12}}\widetilde{u_{31}}\varphi=\frac{\varphi(u_{31}u_{12}\cdot u_{12}u_{31})}{\varphi(u_{31}u_{12}u_{31})}=\frac{\varphi(u_{31}u_{23}\cdot u_{23}u_{31})}{\varphi(u_{31}u_{23}u_{31})}=\widetilde{u_{23}}\widetilde{u_{31}}\varphi\label{cond2}\end{equation}
Now
$$\Phi(\widetilde{u_{12}}\widetilde{u_{31}}(\varphi_\xi))=\left[\begin{array}{ccc}0 & 1 & 0 \\ \ast & 0 & \ast \\ \ast & 0 &\ast \end{array}\right].$$

Using (\ref{cond2})
\begin{align*}
\Phi(\widetilde{u_{12}}\widetilde{u_{31}}\varphi)_{23}&=\widetilde{u_{12}}\widetilde{u_{31}}\varphi(u_{23})=\widetilde{u_{23}}\widetilde{u_{31}}\varphi(u_{23})=\frac{\varphi(u_{31}u_{23}u_{23}u_{23}u_{31})}{\varphi(u_{31}u_{23}u_{31})}=1
\\ \implies \Phi(\widetilde{u_{12}}\widetilde{u_{31}}(\varphi))&=\left[\begin{array}{ccc}0 & 1 & 0 \\ 0 & 0 & 1 \\ \Phi(\widetilde{u_{12}}\widetilde{u_{31}}(\varphi))_{31} & 0 & 0 \end{array}\right],\end{align*}
and as $\Phi$ maps to doubly stochastic matrices, $\Phi(\widetilde{u_{12}}\widetilde{u_{31}}(\varphi))$ is equal to the permutation matrix $(132)$. This implies that $\widetilde{u_{12}}\widetilde{u_{31}}(\varphi)\in S_3^+$ is deterministic, that is its Birkhoff slice is a permutation matrix.

\bigskip

Not convinced this implies that $C(S_3^+)$ is commutative? Here is a proof inspired by the above.
\begin{theorem}
$C(S_3^+)$ is commutative.
\end{theorem}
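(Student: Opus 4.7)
The plan is to promote the relations driving the preceding example into genuine algebraic commutation identities between arbitrary generators of $C(S_3^+)$. The only inputs used are that each $u_{ij}$ is a self-adjoint projection, that entries sharing a row or column of $u$ multiply to zero, and that each row and column sums to $\mathds{1}_{S_3^+}$. Since the $u_{ij}$ generate $C(S_3^+)$, it is enough to show that any two of them commute; two generators sharing a row or column either annihilate each other or coincide, so commute trivially.

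First I would produce two algebraic identities by the ``one generator, two partitions of unity'' trick already used in the example. Multiplying $u_{31}$ on the left by the row-$2$ and column-$2$ partitions of unity and cancelling the common term $u_{22}u_{31}$ gives
\begin{equation*}
u_{12}u_{31}=u_{23}u_{31},
\end{equation*}
whose adjoint is $u_{31}u_{12}=u_{31}u_{23}$. The same argument applied to $u_{23}$, with the row-$1$ and column-$1$ partitions of unity (common term $u_{11}u_{23}$), produces
\begin{equation*}
u_{12}u_{23}=u_{31}u_{23}.
\end{equation*}

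The key chain is then short. Left-multiplying $u_{12}u_{31}=u_{23}u_{31}$ by $u_{12}$ and using $u_{12}^2=u_{12}$ gives $u_{12}u_{31}=u_{12}u_{23}u_{31}$, and the second identity rewrites this as $u_{12}u_{31}=u_{31}u_{23}u_{31}$. Independently, right-multiplying $u_{31}u_{12}=u_{31}u_{23}$ by $u_{31}$ gives $u_{31}u_{12}u_{31}=u_{31}u_{23}u_{31}$. Combining these, $u_{12}u_{31}=u_{31}u_{12}u_{31}$, and taking adjoints yields $u_{31}u_{12}=u_{31}u_{12}u_{31}$ as well, so $u_{12}u_{31}=u_{31}u_{12}$. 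Every pair of generators with distinct row and column indices lies in the $S_3\times S_3$ orbit of $(u_{12},u_{31})$ under independent relabelling of rows and columns, and the argument transfers verbatim to each such pair. Hence all generators commute, and $C(S_3^+)$ is commutative. The main (mild) obstacle is the bookkeeping in the second identity, where the rows and columns have to be chosen so that the substitution dovetails with the first; once this is arranged, the chain of equalities is purely formal.
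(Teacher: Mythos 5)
Your proof is correct: both cancellation identities check out ($u_{12}u_{31}=u_{23}u_{31}$ from the row-$2$/column-$2$ partitions, $u_{12}u_{23}=u_{31}u_{23}$ from the row-$1$/column-$1$ partitions), the chain leading to $u_{12}u_{31}=u_{31}u_{12}u_{31}=u_{31}u_{12}$ is sound, and the reduction of the general case to one pair via row/column relabelling plus the trivial commutation of generators sharing a row or column is legitimate (relabelling is an automorphism by the universal property, or, as you say, the computation simply transfers). The core tool is the same as the paper's --- sandwiching a single generator between two partitions of unity and cancelling the common term --- but the endgame differs. The paper works with the pair $u_{11},u_{22}$ and establishes the three-step chain $u_{11}u_{22}=u_{11}u_{33}=u_{22}u_{33}=u_{22}u_{11}$, each link being one application of the cancellation trick; no adjoints, no projection identity $p^2=p$, and no intermediate element are needed, so the whole proof is three displayed lines. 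Your version stays closer to the worked example preceding the theorem (which already derives $u_{12}u_{31}=u_{23}u_{31}$ and its adjoint) and extracts commutativity by exhibiting $u_{12}u_{31}$ as the manifestly self-adjoint element $u_{31}u_{23}u_{31}=u_{31}u_{12}u_{31}$. What your route buys is continuity with the state-collapse picture of the example; what the paper's buys is brevity and the elimination of the adjoint/self-adjointness step. Both ultimately rest on the same symmetry reduction to a single pair.
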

\begin{proof}
It suffices to show that $u_{11}u_{22}=u_{22}u_{11}$ by showing:
$$u_{11}u_{22}=u_{11}u_{33}=u_{22}u_{33}=u_{22}u_{11}.$$
The first equality follows from:
$$u_{11}(u_{21}+u_{22}+{u_{23}})=u_{11}(u_{13}+u_{23}+u_{33}),$$
the second from
$$(u_{11}+u_{21}+u_{31})u_{33}=(u_{21}+u_{22}+u_{23})u_{33},$$
and the third from
$$u_{22}(u_{31}+u_{32}+u_{33})=u_{22}(u_{11}+u_{21}+{u_{31}}).$$
\end{proof}
\end{example}
\subsection{Deterministic Permutations}\label{Classical}
Let $\jmath:S_N\hookrightarrow M_N(\mathbb{C})$ be the embedding that sends a permutation to its permutation matrix.
A \emph{deterministic permutation in $\mathbb{G}$}  is a quantum permutation $\varphi\in\mathbb{G}$ such that $\Phi(\varphi)=\jmath(\sigma)$ for some $\sigma\in S_N$. In this case write $\varphi=\operatorname{ev}_\sigma$.
\begin{corollary}\label{characters}
A quantum permutation is deterministic if and only if it is a character.
\end{corollary}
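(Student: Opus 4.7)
The plan is to argue both directions with the forward implication essentially immediate and the reverse implication turning on a standard Cauchy--Schwarz trick for states that attain value $1$ (or $0$) on a projection.

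For the easier direction, suppose $\varphi$ is a character. Since each $u_{ij}$ is a projection, multiplicativity forces $\varphi(u_{ij})^2 = \varphi(u_{ij}^2)=\varphi(u_{ij})$, so $\varphi(u_{ij})\in\{0,1\}$. Thus $\Phi(\varphi)$ is a $\{0,1\}$-valued doubly stochastic matrix, i.e.\ a permutation matrix $\jmath(\sigma)$, and $\varphi=\operatorname{ev}_\sigma$ is deterministic.

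For the converse, suppose $\Phi(\varphi)=\jmath(\sigma)$, so $\varphi(u_{ij})=\delta_{i,\sigma(j)}$. The goal is to upgrade this from the generators to multiplicativity of $\varphi$ on all of $C(\mathbb{G})$. The main step is to establish, for every $f\in C(\mathbb{G})$ and every pair $(i,j)$, the factorisation
\[
\varphi(u_{ij}\,f)=\varphi(u_{ij})\,\varphi(f).
\]
When $i\neq\sigma(j)$, so that $\varphi(u_{ij})=0$, one applies Cauchy--Schwarz to the positive functional $\varphi$: $|\varphi(u_{ij}f)|^2\le\varphi(u_{ij}^2)\,\varphi(f^*f)=\varphi(u_{ij})\,\varphi(f^*f)=0$. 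When $i=\sigma(j)$, one has $\varphi(u_{ij})=1$, and the same Cauchy--Schwarz inequality applied to the projection $q=\mathds{1}_\mathbb{G}-u_{ij}$ (which satisfies $\varphi(q)=0$) yields $\varphi(qf)=0$, that is $\varphi(u_{ij}f)=\varphi(f)=\varphi(u_{ij})\varphi(f)$. The required identity thus holds in both cases.

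Now an easy induction on the length of a word in the generators shows $\varphi(u_{i_1j_1}u_{i_2j_2}\cdots u_{i_nj_n})=\prod_{k=1}^n\varphi(u_{i_kj_k})$, and hence $\varphi(fg)=\varphi(f)\varphi(g)$ for all $f,g$ in the $*$-algebra of regular functions $\mathcal{O}(\mathbb{G})$ (which is generated by the self-adjoint projections $u_{ij}$). Continuity of $\varphi$ on the $\mathrm{C}^*$-algebra $C(\mathbb{G})$, together with density of $\mathcal{O}(\mathbb{G})$, extends multiplicativity to all of $C(\mathbb{G})$, so $\varphi$ is a character. The one subtlety worth flagging is the density/extension step: it uses only that the $u_{ij}$ generate $C(\mathbb{G})$ as a $\mathrm{C}^*$-algebra, which is part of the definition of a compact matrix quantum group, and that states are automatically norm-continuous. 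Beyond this, the argument is a short Cauchy--Schwarz computation, and there is no genuine obstacle.
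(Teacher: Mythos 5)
Your proof is correct, and your easy direction (character implies deterministic) is essentially word-for-word the paper's argument. For the substantive direction you take a genuinely different, and arguably more elementary, route. The paper passes to the GNS triple $(\mathsf{H}_\sigma,\pi_\sigma,\xi_\sigma)$ of $\operatorname{ev}_\sigma$, notes that $\langle\xi_\sigma,\pi_\sigma(u_{ij})\xi_\sigma\rangle\in\{0,1\}$ forces $\xi_\sigma$ to be an eigenvector of each projection $\pi_\sigma(u_{ij})$, and then observes that the set of $f$ with $\pi_\sigma(f)\xi_\sigma\in\mathbb{C}\xi_\sigma$ is a norm-closed algebra containing the generators, hence all of $C(\mathbb{G})$; multiplicativity then drops out in one line. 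You instead stay at the level of the state itself and prove the multiplicative-domain identity $\varphi(u_{ij}f)=\varphi(u_{ij})\varphi(f)$ by Cauchy--Schwarz (applied to $u_{ij}$ when $\varphi(u_{ij})=0$ and to $\mathds{1}_{\mathbb{G}}-u_{ij}$ when $\varphi(u_{ij})=1$), then induct over words and finish by density of $\mathcal{O}(\mathbb{G})$ together with norm-continuity of states and of multiplication. The two key steps are really the same fact in two guises --- the equality case of Cauchy--Schwarz \emph{is} the statement that $\xi_\sigma$ is an eigenvector of $\pi_\sigma(u_{ij})$ --- but your version avoids invoking GNS altogether, at the cost of an explicit induction and a density/continuity step that the paper's closed-subalgebra argument absorbs automatically. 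Both are complete; the paper's is shorter once GNS is on the table, yours is self-contained from the axioms of a state.
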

\begin{proof}
Suppose that $\varphi=\operatorname{ev}_\sigma$ is deterministic. Consider the GNS representation $(\mathsf{H}_{\sigma},\pi_{\sigma},\xi_{\sigma})$ associated to $\varphi$. Note that by Proposition \ref{BS1}
$$\operatorname{ev}_\sigma (u_{ij})=\langle \xi_{\sigma},\pi_{\sigma}(u_{ij})(\xi_{\sigma})\rangle= 0 \text{ or }1.$$
Using the norm continuity of $\pi_{\sigma}$, this implies that for all $f\in C(\mathbb{G})$, there exists $f_\sigma\in \mathbb{C}$ such that $\pi_{\sigma}(f)(\xi_{\sigma})=f_{\sigma}\xi_{\sigma}$. Therefore
\begin{align*}
  \operatorname{ev}_{\sigma}(gf) & =\langle\xi_{\sigma},\pi_{\sigma}(gf)\xi_{\sigma}\rangle=\langle \xi_{\sigma},\pi_{\sigma}(g)\pi_{\sigma}(f)(\xi_{\sigma})\rangle \\
   & =f_{\sigma}\langle\xi_{\sigma},\pi_{\sigma}(g)\xi_{\sigma}\rangle=\operatorname{ev}_{\sigma}(g)\operatorname{ev}_{\sigma}(f).
\end{align*}
 On the other hand, by the homomorphism property of a character $\varphi\in\mathbb{G}$  $$\varphi(u_{ij})=\varphi(u_{ij}^2)=\varphi(u_{ij})^2\implies \varphi(u_{ij})=0\text{ or 1},$$
that is $\varphi$ is deterministic.\end{proof}
The following can be extracted from this proof:
\begin{corollary}
  A deterministic  $\operatorname{ev}_\sigma\in\mathbb{G}$ is invariant under wave function collapse.
\end{corollary}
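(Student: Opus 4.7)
The plan is to use Corollary \ref{characters}, which has just been established: a deterministic quantum permutation $\operatorname{ev}_\sigma$ is exactly a character on $C(\mathbb{G})$. Thus I get to use full multiplicativity, $\operatorname{ev}_\sigma(fg)=\operatorname{ev}_\sigma(f)\operatorname{ev}_\sigma(g)$, for any $f,g\in C(\mathbb{G})$.

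First I would unpack what invariance under wave function collapse means in this setting. Recall from Section \ref{somequantummech} that if $p\in C(\mathbb{G})$ is a projection with $\operatorname{ev}_\sigma(p)>0$, then the post-measurement state is
$$\widetilde{p}\,\operatorname{ev}_\sigma(f)=\frac{\operatorname{ev}_\sigma(pfp)}{\operatorname{ev}_\sigma(p)}\qquad (f\in C(\mathbb{G})).$$
Because $\operatorname{ev}_\sigma$ is a character and $p$ is a projection, $\operatorname{ev}_\sigma(p)=\operatorname{ev}_\sigma(p)^2\in\{0,1\}$, so the hypothesis $\operatorname{ev}_\sigma(p)>0$ forces $\operatorname{ev}_\sigma(p)=1$.

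Next I would simply compute: using the homomorphism property of $\operatorname{ev}_\sigma$ twice,
$$\operatorname{ev}_\sigma(pfp)=\operatorname{ev}_\sigma(p)\,\operatorname{ev}_\sigma(f)\,\operatorname{ev}_\sigma(p)=\operatorname{ev}_\sigma(f),$$
and therefore
$$\widetilde{p}\,\operatorname{ev}_\sigma(f)=\frac{\operatorname{ev}_\sigma(f)}{1}=\operatorname{ev}_\sigma(f).$$
That is, $\widetilde{p}\,\operatorname{ev}_\sigma=\operatorname{ev}_\sigma$. Inductively, for any finite sequence of projections $p_1,\dots,p_n$ with $\operatorname{ev}_\sigma(|p_n\cdots p_1|^2)>0$, the same argument (or a trivial induction on $n$) gives $\widetilde{p_n}\cdots\widetilde{p_1}\operatorname{ev}_\sigma=\operatorname{ev}_\sigma$, so invariance holds not only under single measurements but also under arbitrary sequential projection measurements. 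For finite spectrum observables the same conclusion then follows via the spectral decomposition (\ref{spectral}).

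There is no real obstacle here: once Corollary \ref{characters} is in hand, the content of the corollary is essentially a one-line calculation, namely that characters sandwich any element between two copies of a projection without changing its value (when that projection has value $1$). The only minor care needed is to verify that the denominator in the definition of $\widetilde{p}\,\operatorname{ev}_\sigma$ is nonzero precisely because the Bernoulli distribution of $p$ under $\operatorname{ev}_\sigma$ is $\{0,1\}$-valued, so conditioning is only ever defined on the event of probability one.
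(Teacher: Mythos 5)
Your proof is correct and is essentially the argument the paper intends: the paper derives this corollary by extraction from the proof of the preceding corollary (that deterministic permutations are precisely characters), and your computation $\operatorname{ev}_\sigma(pfp)=\operatorname{ev}_\sigma(p)\operatorname{ev}_\sigma(f)\operatorname{ev}_\sigma(p)=\operatorname{ev}_\sigma(f)$, together with the observation that $\operatorname{ev}_\sigma(p)\in\{0,1\}$ forces the conditioning denominator to be $1$, is exactly that extraction.
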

The set of deterministic permutations, $G_{\mathbb{G}}$, therefore coincides with the set of characters $\Omega(C(\mathbb{G}))\subset \mathbb{G}$, and is a finite group whenever non-empty (Corollary \ref{grouplaw}). This implies that the set $G_\mathbb{G}$ coincides with the set $\widetilde{\mathbb{G}}$ of Kalantar and Neufang. A \emph{random permutation in }$\mathbb{G}$ is a convex combination of deterministic permutations, and the convex hull of $G_{\mathbb{G}}$ is the set of random permutations in $\mathbb{G}$, which could also be denoted by $M_p(G_{\mathbb{G}})$.

\bigskip

The study of maximal classical subgroups of compact quantum groups has a long history. An equivalent approach, seen for example in Banica and Skalski  \cite{BS2}, is to quotient $C(\mathbb{G})$ by its commutator ideal. Formally these approaches can give empty sets: the character space is empty if and only if the commutator ideal is the whole algebra. Studying instead the universal version $C_u(\mathbb{G})$ at least guarantees a counit, so that $e\in G_{\mathbb{G}}$, and $G_{\mathbb{G}}$ is a group (Corollary \ref{grouplaw}).

\bigskip

With an algebra of  functions on a finite quantum permutation group $F(\mathbb{G})$, a \emph{truly quantum} permutation is any quantum permutation zero on all one dimensional factors. More generally, for $\mathbb{G}\subseteq S_N^+$. working with the enveloping von Neumann algebra $\ell^{\infty}(\mathbb{G})$, in which $\imath:C(\mathbb{G})\hookrightarrow \ell^{\infty}(\mathbb{G})$ embeds,  a deterministic permutation $\operatorname{ev}_\sigma\in \mathbb{G}$ extends to a normal state $\omega_\sigma$ on $\ell^{\infty}(\mathbb{G})$, and thus has a \emph{support projection} $p_\sigma\in \ell^{\infty}(\mathbb{G})$ which is the smallest projection $p_\sigma\in \ell^{\infty}(\mathbb{G})$ such that $\omega_\sigma(p_\sigma)=1$, so that for any projection $p\in\ell^{\infty}(\mathbb{G})$ such that $\omega_\sigma(p)=1$, $p_\sigma\leq p$. Note that
$$\operatorname{ev}_{\sigma}(u_{\sigma(j)j})=\omega_\sigma(\imath(u_{\sigma(j)j}))=1\implies p_{\sigma}\leq \imath(u_{\sigma(j)j})\implies p_{\sigma}=\imath(u_{\sigma(j)j})p_{\sigma}=p_{\sigma}\imath(u_{\sigma(j)j}).$$
Any pair $\sigma\neq\tau$ of permutations are distinguished by some $\sigma(j)\neq \tau(j)$,
$$p_{\sigma}p_{\tau}=p_{\sigma}\imath(u_{\sigma(j)j})\imath(u_{\tau(j)j})p_{\tau}=p_{\sigma}\imath(u_{\sigma(j)j}u_{\tau(j)j})p_{\tau}=0.$$
Define:
\begin{equation} p_C=\sum_{\sigma\in G_{\mathbb{G}}}p_\sigma,\label{classsupp}\end{equation}
and define a quantum permutation $\varphi\in \mathbb{G}$ as \emph{truly quantum} if its normal extension $\omega_\varphi\in \mathcal{S}(\ell^{\infty}(\mathbb{G}))$ has the property that $\omega_\varphi(p_C)=0$. If $G_{\mathbb{G}}$ is empty, $p_C=0$ and all quantum permutations in $\mathbb{G}$ are truly quantum.

\bigskip

A quick consideration shows that if an algebra of functions on a quantum permutation group is a direct sum with a one-dimensional factor $\mathbb{C}f_i$, then the state $f^i:f_i\mapsto 1$ is deterministic.
\begin{example} \label{KP}
The \emph{Kac--Paljutkin quantum group of order eight} \cite{KP6}, $\mathfrak{G}_0$, has algebra of functions structure:
\begin{equation}F(\mathfrak{G}_0)=\mathbb{C}f_1\oplus\mathbb{C}f_2\oplus\mathbb{C}f_3\oplus\mathbb{C}f_4\oplus M_2(\mathbb{C}).\label{KP1}\end{equation}
Where $I_2\in M_2(\mathbb{C})$ the identity, and the projection
$$p:=\left(0,0,0,0,\left(\begin{array}{cc}
                    \frac12 & \frac12 e^{-i\pi/4} \\
                    \frac12 e^{+i\pi/4} & \frac12
                  \end{array}\right)\right),$$
a concrete exhibition of $\mathfrak{G}_0\subset S_4^+$ (Th. 1.1 (8), \cite{4Pts}) comes by the magic unitary:
\begin{align}
u:=\left[\begin{array}{cccc}
                            f_1+f_2 & f_3+f_4 & p & I_2-p \\
                            f_3+f_4 & f_1+f_2 & I_2-p & p \\
                            p^T & I_2-p^T & f_1+f_3 & f_2+f_4 \\
                            I_2-p^T & p^T & f_2+f_4 & f_1+f_3
                          \end{array}\right].\label{KPMU}
\end{align}
The one dimensional factors give deterministic permutations, $f^1=\operatorname{ev}_e$, $f^2=\operatorname{ev}_{(34)}$, $f^3=\operatorname{ev}_{(12)}$ and $f^4=\operatorname{ev}_{(12)(34)}$, so that
$G_{\mathfrak{G}_0}\cong\mathbb{Z}_2\times \mathbb{Z}_2$. Given a quantum permutation $\varphi\in \mathfrak{G}_0$, measurement with an $x(j)$ will see collapse to either a random permutation or a state on the $M_2(\mathbb{C})$ factor:  a  \emph{truly quantum} permutation. This can be illustrated using the notion of a \emph{quantum automorphism group of a finite graph} \cite{ba3}.

\bigskip

A finite graph $X$ with $N$ vertices has a quantum automorphism group $G^+(X)\subseteq S_N^+$. The quantum automorphism group of the below graph is the quantum hyperoctahedral group $H_2^+\subset S_4^+$ \cite{bbc2}:

$$\begin{tikzcd}
\substack{1\\\bullet} \arrow[r,no head] & \substack{2\\\bullet} \\
\substack{\bullet\\ 3} \arrow[r,no head] & \substack{\bullet\\ 4}
\end{tikzcd}$$

The Kac--Paljutkin quantum group $\mathfrak{G}_0\subset S_4^+$ is a quantum subgroup $\mathfrak{G}_0\subset H_2^+\subset S_4^+$, and thus  $\varphi\in \mathfrak{G}_0$ may be viewed as a quantum automorphism of $X$, denoted here by $\varphi(X)$:
$$\begin{tikzcd}
\substack{\varphi(1)\\\bullet} \arrow[r,no head] & \substack{\varphi(2)\\\bullet} \\
\substack{\bullet\\ \varphi(3)} \arrow[r,no head] & \substack{\bullet\\ \varphi(4)}
\end{tikzcd}$$
Note $\varphi(1)$ and $\varphi(2)$ are entangled in the sense that if measurement of $x(1)$ yields e.g. $\varphi(1)=2$, then subsequent measurement of $x(2)$ yields $\varphi(2)=1$ with probability one:
$$\begin{tikzcd}
\substack{2\\\bullet} \arrow[r,no head] & \substack{1\\\bullet} \\
\substack{\bullet\\ \widetilde{u_{21}}\varphi(3)} \arrow[r,no head] & \substack{\bullet\\ \widetilde{u_{21}}\varphi(4)}
\end{tikzcd}$$
 Going back to $\varphi(X)$, if measurement of $x(1)$ yields $\varphi(1)=1$ or $\varphi(1)=2$, then $\varphi$ collapses to a random automorphism, with no non-classical behaviour observable. However, if measurement of $x(1)$ yields $\varphi(1)=3$ or $\varphi(1)=4$ then $\varphi$ collapses to a \emph{truly quantum} automorphism, and moreover there is an uncertainty principle about $x(3)$ and $x(4)$ in the sense that, e.g.
 $$\Phi(\widetilde{u_{31}}\varphi)=\begin{bmatrix}
                                           0 & 0 & \frac12 & \frac12 \\
                                           0 & 0 & \frac12 & \frac12 \\
                                           1 & 0 & 0 & 0 \\
                                           0 & 1 & 0 & 0
                                         \end{bmatrix}$$
 That is, if measurement of $x(1)$ reveals that $\widetilde{u_{31}}\varphi(X)$ is:
 $$\begin{tikzcd}
\substack{3\\\bullet} \arrow[r,no head] & \substack{4\\\bullet} \\
\substack{\bullet\\ \widetilde{u_{31}}\varphi(3)} \arrow[r,no head] & \substack{\bullet\\\widetilde{u_{31}}\varphi(4)}
\end{tikzcd},$$
it is the case that:
$$\mathbb{P}[\widetilde{u_{31}}\varphi(3)=1]=\frac12=\mathbb{P}[\widetilde{u_{31}}\varphi(3)=2].$$
This uncertainty principle will persist: if subsequent measurement shows that $\widetilde{u_{31}}\varphi(3)=1$, there will be more uncertainty:
$$\mathbb{P}[\widetilde{u_{13}}\widetilde{u_{31}}\varphi(1)=3]=\frac12=\mathbb{P}[\widetilde{u_{13}}\widetilde{u_{31}}\varphi(1)=4].$$
and non-classical behaviour can be exhibited, such as:
$$[\varphi(1)=4]\succ[\varphi(3)=1]\succ[\varphi(1)=3].$$

\end{example}

\section{Quantum group law and identity}\label{law}
\subsection{Quantum Group Law}
In the classical case of a finite group $G\subseteq S_N$, for $\sigma,\,\tau\in G$, the group law is encoded within the convolution of the pure states $\operatorname{ev}_{\sigma}$ and $\operatorname{ev}_{\tau}$:
$$(\operatorname{ev}_{\tau}\star \operatorname{ev}_{\sigma})\mathds{1}_{j\to i}=(\operatorname{ev}_{\tau}\otimes \operatorname{ev}_{\sigma})\Delta(\mathds{1}_{j\to i})=(\operatorname{ev}_{\tau}\otimes \operatorname{ev}_{\sigma})\sum_{k}(\mathds{1}_{k\to i}\otimes \mathds{1}_{j\to k})=\mathds{1}_{j\to i}(\tau\sigma).$$
The same game can be played with quantum permutations:
\begin{definition}
The \emph{quantum group law} $\mathbb{G}\times \mathbb{G}\to \mathbb{G}$ is the convolution, $\varphi_2\star\varphi_1:=(\varphi_2\otimes\varphi_1)\Delta$.
\end{definition}

\begin{proposition}

The Birkhoff slice is multiplicative:
$$\Phi(\varphi_2\star \varphi_1)=\Phi(\varphi_2)\Phi(\varphi_1),\qquad(\varphi_2,\,\varphi_1\in\mathbb{G}).$$
\end{proposition}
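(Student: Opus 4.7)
The plan is to unwind the definitions in order and verify that the claimed identity is nothing more than the formula for the comultiplication on the generators, combined with the definition of convolution. There is no genuine obstacle here; the content of the statement is essentially that the assignment $\varphi \mapsto \Phi(\varphi)$ intertwines the convolution product with matrix multiplication precisely because $u$ is the fundamental representation.

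Concretely, I would begin by fixing $i,j \in \{1,\dots,N\}$ and computing $\Phi(\varphi_2 \star \varphi_1)_{ij}$. By the definition of the Birkhoff slice this equals $(\varphi_2 \star \varphi_1)(u_{ij})$, and by the definition of the quantum group law this in turn equals $(\varphi_2 \otimes \varphi_1)\bigl(\Delta(u_{ij})\bigr)$. Next I would invoke the formula for $\Delta$ on the generators of $C(\mathbb{G})$ recalled in (\ref{counit}), namely
\[
\Delta(u_{ij}) = \sum_{k=1}^{N} u_{ik} \otimes u_{kj},
\]
and use linearity of $\varphi_2 \otimes \varphi_1$ on the (algebraic) tensor product to obtain
\[
(\varphi_2 \otimes \varphi_1)\bigl(\Delta(u_{ij})\bigr) = \sum_{k=1}^{N} \varphi_2(u_{ik})\,\varphi_1(u_{kj}).
\]

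Finally, recognising each factor as an entry of a Birkhoff slice gives
\[
\sum_{k=1}^{N} \Phi(\varphi_2)_{ik}\,\Phi(\varphi_1)_{kj} = \bigl(\Phi(\varphi_2)\Phi(\varphi_1)\bigr)_{ij},
\]
which is the desired identity entrywise, and hence as matrices. The only point worth flagging is that the sum defining $\Delta(u_{ij})$ is finite, so no issue arises with evaluating the state on an infinite series; everything takes place in the algebraic tensor product $\mathcal{O}(\mathbb{G}) \otimes_{\text{alg.}} \mathcal{O}(\mathbb{G})$ before any completion is invoked, and the states restrict to linear functionals there without difficulty.
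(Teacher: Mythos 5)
Your computation is correct and is exactly the intended argument: the paper states this proposition without proof precisely because it follows in one line from $\Delta(u_{ij})=\sum_k u_{ik}\otimes u_{kj}$ and the definition of the convolution, as you show. Your remark that the sum is finite and lives in the algebraic tensor product is a reasonable extra precaution but adds nothing essential.
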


\begin{corollary}\label{grouplaw}\label{coamen}
The set of deterministic permutations $G_\mathbb{G}$ is either a group, or empty. It is a group if and only if  $\varepsilon\in\mathbb{G}$. Therefore if a quantum permutation group $\mathbb{G}$ is coamenable, or the algebra of continuous functions $C(\mathbb{G})\cong C_u(\mathbb{G})$, then $G_\mathbb{G}$  is a group. In particular, if  $\mathbb{G}\subseteq S_4^+$, then $G_\mathbb{G}$  is a group.
\end{corollary}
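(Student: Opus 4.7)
The plan is to establish the four assertions in sequence, leaning on the multiplicativity $\Phi(\varphi_2\star\varphi_1)=\Phi(\varphi_2)\Phi(\varphi_1)$ proved just above and the identification $G_{\mathbb{G}}=\Omega(C(\mathbb{G}))$ from Corollary \ref{characters}.

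First I would check that $G_{\mathbb{G}}$ is closed under $\star$: if $\varphi_1,\varphi_2$ are characters, then $(\varphi_2\otimes\varphi_1)\circ\Delta$ is a composition of $*$-homomorphisms, hence a character; equivalently, $\Phi(\varphi_2\star\varphi_1)$ is a product of permutation matrices, hence a permutation matrix. The Birkhoff slice then restricts to an injective semigroup homomorphism $\Phi:G_{\mathbb{G}}\to S_N$ (injectivity on characters is the classical fact used by Woronowicz to prove Theorem \ref{commmat}). Consequently $(G_{\mathbb{G}},\star)$ embeds as a finite subsemigroup of the symmetric group.

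Next, for the equivalence. If $\varepsilon$ extends to a state on $C(\mathbb{G})$, the counital property gives $\varepsilon\star\varphi=\varphi=\varphi\star\varepsilon$ for every $\varphi\in\mathbb{G}$, so $\varepsilon$ serves as a two-sided identity in $G_{\mathbb{G}}$. Since $\Phi(G_{\mathbb{G}})\subseteq S_N$ is finite, every $\operatorname{ev}_\sigma$ satisfies $\Phi(\operatorname{ev}_\sigma)^m=I_N$ for some $m\geq 1$; because $G_{\mathbb{G}}$ is closed under $\star$ and $\Phi$ is injective, this means $\operatorname{ev}_\sigma^{\star m}=\varepsilon$, so $\operatorname{ev}_\sigma^{\star(m-1)}$ is the required inverse. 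Conversely, if $G_{\mathbb{G}}$ is a nonempty group, its identity element $\operatorname{ev}_e$ is idempotent under $\star$, so $\Phi(\operatorname{ev}_e)$ is an idempotent permutation matrix, hence $I_N$; then $\operatorname{ev}_e$ and $\varepsilon$ agree on the generators $u_{ij}$, hence on all of $\mathcal{O}(\mathbb{G})$, and by norm continuity on all of $C(\mathbb{G})$, giving $\varepsilon=\operatorname{ev}_e\in\mathbb{G}$.

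The remaining two claims are then quick consequences. Coamenability is by definition the isomorphism $C(\mathbb{G})\cong C_u(\mathbb{G})$, and on $C_u(\mathbb{G})$ the counit extends by universality to a character, so $\varepsilon\in\mathbb{G}$ in either hypothesis. For $\mathbb{G}\subseteq S_4^+$, I would invoke the known coamenability of $S_4^+$ together with the Salmi inheritance result cited in the proof of Theorem \ref{coamenable}, giving coamenability of $\mathbb{G}$ and hence the existence of $\varepsilon\in\mathbb{G}$. The most delicate step is the ``only if'' direction: one must be careful that the abstract identity of $G_{\mathbb{G}}$ truly equals $\varepsilon$ as a state on the completion, not merely on the dense Hopf $*$-algebra; but this is handled cleanly by the density of $\mathcal{O}(\mathbb{G})$ in $C(\mathbb{G})$ and the continuity of states.
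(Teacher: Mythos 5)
Your argument is correct and follows exactly the route the paper intends: the paper states this corollary without a written proof, deriving it from the multiplicativity of the Birkhoff slice, the identification $G_{\mathbb{G}}=\Omega(C(\mathbb{G}))$ of Corollary \ref{characters}, and the injectivity of $\Phi$ on characters, which is precisely the machinery you use. The only cosmetic remark is that the finite-order argument $\Phi(\operatorname{ev}_\sigma)^m=I_N$, which you place inside the ``$\varepsilon\in\mathbb{G}$'' direction, is also what delivers ``nonempty $\Rightarrow$ $\varepsilon\in\mathbb{G}$'' (a character with Birkhoff slice $I_N$ extends the counit), so the first assertion of the corollary is covered by the same steps.
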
If $G_{\mathbb{G}}$ is a group, the counit plays precisely the role of the identity:
$$\varphi\star\varepsilon=\varphi=\varepsilon\star \varphi\qquad (\varphi\in\mathbb{G}).$$ Restricted to $G_{\mathbb{G}}$, precomposing $\operatorname{ev}_\sigma$ with the antipode $S:C(\mathbb{G})\to C(\mathbb{G})$ gives an inverse:
$$\operatorname{ev}_\sigma\circ S=\operatorname{ev}_{\sigma^{-1}}\implies \operatorname{ev}_{\sigma^{-1}}\star \operatorname{ev}_{\sigma}= \operatorname{ev}_{\sigma}\star \operatorname{ev}_{\sigma^{-1}}=\operatorname{ev}_e=\varepsilon.$$
For a general quantum permutation, it might be more accurate to call $\varphi^{-1}:=\varphi\circ S$ the \emph{reverse} of $\varphi$ in the sense that
\begin{align}
\varphi^{-1}(|u_{i_nj_n}\cdots u_{i_1j_1}|^2)&=\varphi(|u_{j_ni_n}\cdots u_{j_1i_1}|^2)\label{reverse}
\\ \implies \mathbb{P}[[\varphi^{-1}(i_n)=j_n]\succ\cdots\succ [\varphi^{-1}(i_1)=j_1]]&=\mathbb{P}[[\varphi(j_n)=i_n]\succ \cdots\succ [\varphi(j_n)=i_n]]\nonumber
\end{align}

\bigskip

Remarkably for a piece about compact quantum groups, the \emph{Haar state} has not yet been introduced. The following is equivalent to more conventional definitions.
\begin{definition}
A quantum permutation group $\mathbb{G}$ has a quantum permutation $h_{\mathbb{G}}$ called the \emph{Haar state} that is the unique annihilator for the quantum group law, that is for all $\varphi\in\mathbb{G}$
$$h_{\mathbb{G}}\star \varphi=h_{\mathbb{G}}=\varphi\star h_{\mathbb{G}}.$$
\end{definition}
The non-zero elements of the Birkhoff slice $\Phi(h_{\mathbb{G}})$ are   equal along rows and columns. The Haar state can be thought of as the ``maximally random'' quantum permutation: in the classical case of $S_N$ it corresponds to the uniform measure on $S_N$.

\subsection{`Abelian'  Quantum Permutation Groups}\label{duals}
Given a compact group $G$, the algebra of continuous functions analogue of ``\emph{$G$ is abelian}'' is that ``$C(G)$ \emph{is cocommutative}'', that is to say $\Delta=\tau\circ\Delta$, where $\tau(f\otimes g)=g\otimes f$ is the flip map. In this sense an abelian compact quantum group is given by a cocommutative algebra of continuous functions $C(\widehat{\Gamma})$, that is an algebra of continuous functions on the dual of a discrete group $\Gamma$. As the quantum permutations in $\widehat{\Gamma}$ are in the state space of $C(\widehat{\Gamma})$, which is some class of positive definite functions on $\Gamma$ with pointwise, commutative multiplication, this idea that duals of discrete groups are abelian is trivial through Definition \ref{defqp1}.

\bigskip

Consider a cyclic group $\langle \gamma\rangle$ generated by an element $\gamma$ of order $N$. For $\omega:=\exp(2\pi i/N)$, consider the following vector in $F(\widehat{\langle\gamma\rangle})^N$:
\begin{equation}\frac{1}{N}\left[\begin{array}{c}
                                     e+\gamma+\gamma^2+\cdots +\gamma^{N-1} \\
                                     e+\omega \gamma+\omega^2 \gamma^2+\cdots +\omega^{N-1} \gamma^{N-1} \\
                                     e+\omega^2 \gamma+(\omega^2)^2 \gamma^2+\cdots +(\omega^2)^{N-1}\gamma^{N-1} \\
                                     \cdots \\
                                     e+\omega^{N-1}\gamma+(\omega^{N-1})^2 \gamma^2 +\cdots +(\omega^{N-1})^{N-1} \gamma^{N-1}
                                   \end{array}\right].\label{circ}\end{equation}
A circulant matrix defined by this vector is a magic unitary for $\widehat{\langle\gamma\rangle}\subseteq S_N^+$:
$$[u^{\gamma}]_{i,j}:=\frac{1}{N}\sum_{\ell=1}^N\omega^{(i-j)\ell} \gamma^{\ell}.$$
Note that
$$\gamma=u_{11}^{\gamma}+\omega^{N-1}u_{21}^{\gamma}+\omega^{N-2}u_{31}^{\gamma}+\cdots +\omega u_{N,1}^{\gamma}.$$
 Let $\Gamma=\langle\gamma_1,\dots\gamma_k\rangle$ be a finitely generated discrete group, with generators of finite order $N_1,N_2,\dots,N_k$. Then the dual $\widehat{\Gamma}\subseteq S_N^+$ where $N:=\sum_pN_p$ via the block magic unitary:

$$u=\left(\begin{array}{cccc}
                               u^{\gamma_1} & 0 & \cdots & 0 \\
                               0 & u^{\gamma_2} & \cdots & 0 \\
                               \vdots & \vdots & \ddots & 0 \\
                               0 & 0 & \cdots & u^{\gamma_k}
                             \end{array}\right).$$

                             Note that due to the fact that the $u^{\gamma_p}$ are circulant matrices, the entries of each $u^{\gamma_p}$ commute. Such a construction was used to prove Theorem \ref{coamenable}. It is long known that duals of finite groups $G$ are quantum permutation groups, but earlier references such as \cite{BBN} placed $\widehat{G}\subset S_{|G|^2}^+$. That result can be shown by considering each group element an order $|G|$ generator, so making for each $g_p$ a $|G|\times |G|$ magic unitary $u^{g_p}$, and forming the block matrix $\operatorname{diag}(u^{g_1},\dots,u^{g_{|G|}})$.
Smaller embeddings of duals of finite groups abound: an induction on $|G|$ shows that $\widehat{G}\subseteq S_{|G|}^+$. 

\bigskip

Let $G$ be a finite group. Where $\operatorname{Irr}(G)$ is an index set for a maximal set of pairwise inequivalent \emph{unitary} irreducible representations $\rho_\alpha:G\to M_{d_\alpha}(\mathbb{C})$, the algebra of functions on the dual has algebra
$$F(\widehat{G})=\bigoplus_{\alpha\in \operatorname{Irr}(G)}M_{d_\alpha}(\mathbb{C}).$$
When looking at concrete examples, sometimes it is easier to look at the regular representation:
$$\pi(F(\widehat{G}))\subset B(\mathbb{C}^{|G|}),\quad g:e_h\mapsto e_{gh}.$$
Each one dimensional representation gives a deterministic permutation. The quantum permutations are positive definite functions on $G$. The function $\mathds{1}_G\in \widehat{G}$ is the counit $F(\widehat{G})\to \mathbb{C}$.  That $\widehat{G}$ is abelian implies that the group $G_{\widehat{G}}$ of deterministic permutations is abelian. If $G$ is a simple group, either there are no truly quantum permutations, and $G\cong \mathbb{Z}_p$ for a prime $p$, and $\widehat{G}=G$; or $G_{\widehat{G}}$ is the trivial group. The dual of the symmetric group for $N\geq 2$ has only two deterministic permutations: one is the counit $\varepsilon=\mathds{1}_G$, and the other is the sign representation, an order two deterministic permutation: $\sum_{\sigma\in S_N}\operatorname{sgn}(\sigma)\delta_\sigma$.   The dual of the quaternion group has four deterministic permutations $G_{\widehat{Q}}\cong \mathbb{Z}_2\times\mathbb{Z}_2$,  the dual of a dihedral group has either two or four depending on the order. A \emph{perfect} (finite) group has only one representation of degree one, and the smallest perfect group is $A_5$. In this picture,  Pontryagin duality for a finite abelian group $G$ is nothing but $\widehat{G}$ having no truly quantum permutations: all the representations are one dimensional, and hence deterministic.

\bigskip

Suppose that $\Gamma=\langle \gamma_1,\dots,\gamma_k\rangle$ is a discrete group such that $\widehat{\Gamma}\subseteq S_N^+$. Partition the symbols $1,\dots,N$ into blocks $B_1,\dots,B_k$ with $B_p$ of size $N_p$. The fact that the blocks of $\operatorname{diag}(u^{\gamma_1},\dots,u^{\gamma_k})$ are circulant matrices implies that if for $j\in B_p$, the measurement of a quantum permutation $\varphi\in \widehat{\Gamma}$ with $x(j)$ will see wave function collapse such that the restriction of the state to $B_p$ is now deterministic in the sense that if, for $r,s\in B_p$, $\varphi (u_{rs})\neq 0$, then the matrix $[\Phi(\widetilde{u_{rs}}(\varphi))]_{i,j\in B_p}$ is a permutation matrix. For example, for $N\geq 9$, and generators $\sigma$ of order two, and $\tau$ of order three, consider $\widehat{S_N}\subset S_5^+$ with blocks $B_1=\{1,2\}$ from $u^{\sigma}$ and $B_2=\{3,4,5\}$ from $u^{\tau}$. Let $\varphi\in \widehat{S_N}$ be a quantum permutation. Measure with
$$x(4)=3u_{34}+4u_{44}+5u_{54}.$$
Suppose that the measurement yields $x(4)=5$. Then the quantum permutation collapses to:
$$\widetilde{u_{54}}(\varphi)=\frac{\varphi(u_{54}\cdot u_{54})}{\varphi(u_{54})},$$
and the circulant nature of $u^{\tau}$ implies that the Birkhoff slice
$$\Phi(\widetilde{u_{54}}\varphi)=\left(\begin{array}{cc|ccc}
                                                                           \alpha & 1-\alpha & 0 & 0 & 0 \\
                                                                           1-\alpha & \alpha & 0 & 0 & 0 \\ \hline
                                                                           0 & 0 & 0 & 0 & 1 \\
                                                                           0 & 0 & 1 & 0 & 0 \\
                                                                           0 & 0 & 0 & 1 & 0
                                                                         \end{array}\right),$$
                                                                         that is the measurement of $x(4)=5$ collapses the quantum permutation in such a way that it is deterministic on $B_2$.
Unlike in the case of $S_3^+$, while $x(1)$ and $x(2)$ are now entangled, the measurement of $x(1)$ can disturb the state in such a way that the complete certainty around $B_2$ is disturbed. The same phenomenon necessarily occurs for the dual of any non-abelian finite group. The circulant nature of the blocks implies that for all $r,s\in B_p$, the measurement of $x(r)$ determines $x(s)$, and all such measurements could be denoted $x(B_p)$. The dual of a discrete group $\Gamma=\langle\gamma_1,\dots,\gamma_k\rangle $ could model a $k$ particle ``entangled'' quantum system, where the $p$-th particle, corresponding to the block $B_p$, has $N_p$ states, labelled $1,\dots,N_p$. Full information about the state of all particles is in general impossible, but measurement with $x(B_p)$ will see collapse of the $p$th particle to a definite state. Only the deterministic/random permutations in $\widehat{\Gamma}$ would correspond to classical states.

\section{Phenomena}\label{randomwalks}

\subsection{Quasi-subgroups}\label{quasi}
If $G$ is a classical finite group, subsets $S\subseteq G$ that are closed under the group law are subgroups. More generally, for the algebra of continuous functions $C(G)$ on a classical compact group,  states $C(G)\to \mathbb{C}$ correspond via integration to Borel probability measures in $M_p(G)$, and in the context of this work could be called \emph{random permutations} (of an infinite set if $G$ is infinite). In this context the convolution might be called the random group law, and the Kawada--It\^{o} theorem says that random permutations idempotent with respect to the random group law are Haar measures/states of compact subgroups of $G$ \cite{Kaw}.

\bigskip

 Suppose that $\mathbb{H}\subseteq \mathbb{G}\subseteq S_N^+$ by $\pi:C(\mathbb{G})\to C(\mathbb{H})$. The Haar state of $\mathbb{H}$ in $\mathbb{G}$, $h_{\mathbb{H}}\circ \pi$, is an idempotent state in $\mathbb{G}$, called a \emph{Haar idempotent}. Compact quantum groups can have non-Haar idempotents. In our context these are quantum permutations $\phi\in\mathbb{G}$ idempotent with respect to the quantum group law, $\phi\star \phi=\phi$, that are not the Haar state on any compact subgroup. In the Gelfand picture, idempotent states correspond to measures uniform on virtual objects called \emph{quasi-subgroups}\footnote{this terminology is from \cite{Kasp} and has nothing to do with cancellative magmas}.

\bigskip

In the case of finite quantum groups, an idempotent state $\phi_{\mathbb{S}}$ is associated to a \emph{group-like projection} $\mathds{1}_{\mathbb{S}}$, which is also the support projection of $\phi_{\mathbb{S}}$ (see \cite{ERG} for more including original references), and therefore it is tenable to define:
\begin{definition}
Let $\mathbb{G}$ be a finite quantum permutation group with an idempotent state $\phi_{\mathbb{S}}\in\mathbb{G}$ and associated group-like projection $\mathds{1}_{\mathbb{S}}\in F(\mathbb{G})$. The associated \emph{quasi-subgroup} $\mathbb{S}\subseteq \mathbb{G}$ is given by:
\begin{equation}\mathbb{S}:=\left\{\varphi\in\mathbb{G}:\varphi(\mathds{1}_{\mathbb{S}})=1\right\}.\label{null}\end{equation}
\end{definition}
Quasi-subgroups of finite quantum groups behave very much like quantum subgroups: they are closed under the quantum group law (Prop. 3.12, \cite{ERG}), they contain the identity $\varepsilon$, and they are closed under precomposition with the antipode (\cite{Land}).

\bigskip

Let $\Gamma=\langle \gamma_1,\dots,\gamma_k\rangle$ be a discrete group with generators of finite order. The idempotent states  $\mathds{1}_{\Lambda}$ are given by subgroups $\Lambda\subseteq \Gamma$. They have support projections:
$$\chi_{\Lambda}=\frac{1}{|\Lambda|}\sum_{\lambda\in\Lambda}\lambda,$$
and so give quasi-subgroups $\mathbb{S}_{\Lambda}\subseteq \widehat{\Gamma}$ :
$$\mathbb{S}_{\Lambda}:=\{\varphi\in \widehat{\Gamma}:\varphi(\lambda)=1\text{ for all }\lambda\in \Lambda\}.$$
They give quantum subgroups only when $\Lambda$ is a normal subgroup of $\Gamma$. This is an illustration of the fact that quotients $\Gamma \to \Gamma/\Lambda$ linearly extend to quotients $C(\widehat{\Gamma})\to C(\widehat{\Gamma/\Lambda})$, which give rise to quantum subgroups $\widehat{\Gamma/\Lambda}\subseteq\widehat{\Gamma}$ via $$\pi:C(\widehat{\Gamma})\to C(\widehat{\Gamma/\Lambda});\quad \sum_{\gamma\in \Gamma}\alpha_\gamma \gamma\mapsto \sum_{\gamma \in\Gamma}\alpha_\gamma [\gamma].$$
Pal's idempotents in the Kac--Paljutkin quantum group provide another example of non-Haar idempotents \cite{Pal}: where $f^1,\,f^4$ are dual to $f_1,\,f_4\in F(\mathfrak{G_0})$, and $E^{11},\,E^{22}$  dual to $E_{11},\,E_{22}$ in the $M_2(\mathbb{C})$ factor of $F(\mathfrak{G}_0)$, consider the convex hulls $\mathbb{S}_i:=\operatorname{co}(\{f^1,f^4,E^{ii}\})$, with associated idempotent states $\frac14 f^1+\frac14 f^4+\frac12 E^{ii}\in\mathfrak{G}_0$. These $\mathbb{S}_i$ are quasi-subgroups.

\bigskip

A very good question is: why are quasi-subgroups given by non-Haar idempotents not considered quantum subgroups? The conventional analysis for an idempotent state $\phi\in \mathbb{G}$ on a  finite quantum group $\mathbb{G}$ is to consider the ideal:
$$N_{\phi}:=\left\{g\in F(\mathbb{G}):\phi(g^*g)=0\right\}.$$
Franz and Skalski (Th. 4.5, \cite{idempotent}) show that $\phi$ is a Haar idempotent precisely when $N_\phi$ is two-sided, equivalently self-adjoint, equivalently $S$-invariant. Therefore when $\phi$ is a non-Haar idempotent, the quotient $F(\mathbb{G})/N_{\phi}$ doesn't have a quantum group structure.

\bigskip

That not all quasi-subgroups are quantum groups can be explained in terms of wave function collapse. For example for Pal's quasi-subgroup $\mathbb{S}_1:=\operatorname{co}(\{f^1,f^4,E^{11}\})$, the support of the idempotent state
$$\phi_1=\frac14 f^1+\frac14 f^4+\frac12 E^{11},$$
is $\mathds{1}_{\mathbb{S}_1}:=f_1+f_2+E_{11}$. Where $\mathfrak{G}_0\subset S_4$ is given by (\ref{KPMU}), consider the matrix unit $\varphi:=E^{11}$ in $\mathbb{S}_1$:
$$\mathbb{P}[\varphi(3)=1]=E^{11}(u_{13})=\frac{1}{2}.$$
Therefore the quantum permutation conditioned on $\varphi(3)=1$ is:
$$\widetilde{u_{13}}\varphi=\frac{\varphi(u_{13}\cdot u_{13})}{\varphi(u_{13})}=2E^{11}(u_{13}\cdot u_{13}).$$
But:
$$\widetilde{u_{13}}\varphi(\mathds{1}_{\mathbb{S}_1})=2E^{11}(u_{13}\mathds{1}_{\mathbb{S}_1} u_{13})=\frac12,$$
and this implies with (\ref{null}) that measurement with $u_{13}$ has conditioned the quantum permutation outside the quasi-subgroup.

\bigskip

So  quasi-subgroups that are not quantum subgroups are just like quantum subgroups: until you start measuring them and it is seen that they are not stable under wave function collapse. It can be shown that for $\mathrm{C}^*$-algebras generated by projections such as algebras of functions $F(\mathbb{G})$ on finite quantum permutation groups, if $p\in F(\mathbb{G})$ is a projection and $J:=F(\mathbb{G})p$ is a left but not both-sided ideal, then there exists a projection $q\in F(\mathbb{G})$ such that $qpq\not\in J$. That implies that for every quasi-subgroup $\mathbb{S}\subset \mathbb{G}$ that is not a quantum subgroup, there is a quantum permutation $\varphi\in \mathbb{S}$ and a projection $q\in F(\mathbb{G})$ such that $\widetilde{q}\varphi\not\in \mathbb{S}$.

\bigskip

This cannot happen with a finite quantum subgroup $\mathbb{H}\subseteq  \mathbb{G}$.  Let $\mathds{1}_{\mathds{H}}\in F(\mathbb{G})$ be the support projection of the Haar idempotent $h_{\mathbb{H}}\circ \pi$, and define $\mathbb{H}\subseteq \mathbb{G}$  by
$$\mathbb{H}:=\{\varphi\in\mathbb{G}\,:\,\varphi(\mathds{1}_{\mathbb{H}})=1\}.$$
For any $\varphi\in\mathbb{H}$ and projection $p\in F(\mathbb{G})$, the fact that $\mathds{1}_{\mathbb{H}}$ is central \cite{idempotent} implies that $\widetilde{p}\varphi\in\mathbb{H}$; that is quantum subgroups are invariant under wave function collapse.

\subsection{Fixed Points Phenomena\label{fpp} and Quantum Transpositions}
Given a quantum permutation group $\mathbb{G}\subseteq S_N^+$, define the number of fixed points observable:
$$\operatorname{fix}:=\sum_{j=1}^Nu_{jj}.$$
In general, the spectrum of $\operatorname{fix}$ contains non-integers: indeed by looking at a faithful representation $\pi(C(\widehat{D_{\infty}}))\subset B(L^2([0,1],M_2(\mathbb{C})))$ \cite{Raeburn}, it can be seen that for $\widehat{D_{\infty}}\subset S_4^+$, $\sigma(\operatorname{fix})=[0,4]$. For $\operatorname{fix}$ in universal $C(S_N^+)$, $\sigma(\operatorname{fix})=[0,N]$. Back with arbitrary $\mathbb{G}\subseteq S_N^+$, when the spectrum of $\operatorname{fix}$ is finite, such as in the case of a finite quantum permutation group, there is a spectral decomposition in $C(\mathbb{G})$:
\begin{equation}\operatorname{fix}=\sum_{\lambda \in\sigma(\operatorname{fix})}\lambda \,p_\lambda.\label{fixspect}\end{equation}

\begin{definition}
Where $\operatorname{fix}=\sum_{j=1}^Nu_{jj}$ has spectral decomposition (\ref{fixspect}), a quantum permutation in a finite quantum permutation group $\mathbb{G}\subseteq S_N^+$ \emph{has $\lambda$ fixed points} if $\varphi(p_\lambda)=1$. A quantum permutation in $\mathbb{G}\subseteq S_N^+$ with $N-2$ fixed points is a quantum \emph{transposition}.
\end{definition}
Note that if a quantum permutation has $\lambda$ fixed points, the trace of $\Phi(\varphi)$ is $\lambda$.

\bigskip

Define a magic unitary for $\widehat{S_3}\subset S_4^+$ by
\begin{align*}\left[\begin{array}{cc}
                           u^{(12)} & 0 \\ 0 & u^{(13)}
                          \end{array}\right].\end{align*}

The spectrum $\sigma(\operatorname{fix})=\{0,1,3,4\}$. The deterministic permutations $\operatorname{ev}_e$ (given by the trivial representation) and $\operatorname{ev}_{(12)(34)}$ (given by the sign representation) have four and zero fixed points. A quantum permutation with three fixed points is:
$$\varphi=\delta_e+\frac12\delta_{(12)}+\frac12\delta_{(13)}-\delta_{(23)}-\frac12\delta_{(123)}-\frac{1}{2}\delta_{(132)}.$$
It has Birkhoff slice:
$$\Phi(\varphi)=\left(\begin{array}{cccc}
                   3/4 & 1/4 & 0 & 0 \\
                   1/4 & 3/4 & 0 & 0 \\
                   0 & 0 & 3/4 & 1/4 \\
                   0 & 0 & 1/4 & 3/4
                 \end{array}\right).$$
Placing $\widehat{S_3}\subset S_8^+$ via $\operatorname{diag}(u^{\widehat{S_3}},u^{\widehat{S_3}})$ is one way to get a transposition, however note, reminding of $A_8\lhd S_8$, there is a periodicity to $\varphi\in S_8^+$:

$$\lim_{k\to \infty}\varphi^{\star2k}=\delta_e+\delta_{(23)}\text{; and }\lim_{k\to \infty}\varphi^{\star (2k+1)}= \delta_e-\delta_{(23)}.$$
 On the technical level, this is unlike the periodicity of the state uniform on permutations of odd parity because $\delta_e+\delta_{(23)}$ is not the Haar state on a quantum subgroup of $\widehat{S_3}$, so it doesn't make sense to say that $\langle (23)\rangle$ is normal in $S_3$. See \cite{ERG} for more.

\bigskip

\bigskip

Another quantum phenomenon is that there are quantum permutations with $N-1$ fixed points which are \emph{not} the identity. Consider the finite quantum group $\widehat{S_4}\subset S_5^+$ given by the magic unitary:
$$\left(\begin{array}{cc}u^{(12)} & 0 \\ 0 & u^{(234)}\end{array}\right)\in M_5(F(\widehat{S_4})).$$
Representing $\pi(F(\widehat{S_4}))\subset B(\mathbb{C}^{24})$ with the regular representation, and employing a CAS, it can be found that
$$\sigma(\operatorname{fix})=\left\{0,\frac{5-\sqrt{17}}{2},1,2,3,4,\frac{5+\sqrt{17}}{2},5\right\},$$
so that the phenomenon of quantum permutations with a non-integer number of fixed points occurs for $\widehat{S_4}\subset S_5^+$. Define subsets of $S_4$:
$$X_1:=\langle (34)\rangle,\,X_2:=(12)\langle(34)\rangle,\,X_3:=\{\sigma:\sigma(1)=1\}\backslash X_1,\,X_4:=\{\sigma:\sigma(2)=2\}\backslash X_1,$$ $$\,X_5:=\{(13)(24),(14)(23),(1423),(1324)\}\text{, and }X_6:=S_4\left\backslash \left(\bigcup_{\ell=1}^5 X_\ell\right)\right. .$$
Then the following quantum permutation has four fixed points and is not the identity/counit:
\begin{equation}\varphi:=\mathds{1}_{X_1}+\frac{1}{3}\mathds{1}_{X_2}+\frac{5}{6}\mathds{1}_{X_3}-\frac12\mathds{1}_{X_4}-\frac23 \mathds{1}_{X_5}-\frac16\mathds{1}_{X_6},\label{randtran}\end{equation}
and has Birkhoff slice:
$$\Phi(\varphi)=\left(\begin{array}{ccccc}
    2/3 & 1/3 & 0 & 0 & 0 \\
    1/3 & 2/3 & 0 & 0 & 0 \\
    0 & 0 & 8/9 & 1/18 & 1/18 \\
    0 & 0 & 1/18 & 8/9 & 1/18 \\
    0 & 0 & 1/18 & 1/18 & 8/9
  \end{array}\right)$$
As the convolution power in $\widehat{S_4}$ is pointwise multiplication, $\varphi^{\star k}\to \mathds{1}_{X_1},$ and there is convergence to a non-Haar idempotent.

\bigskip

For any integer $\ell\geq 2$, quantum permutation groups $\mathbb{G}\subseteq S_N^+$ are also quantum permutation groups $\mathbb{G}\subset S_{\ell N}^+$ via:
$$\operatorname{diag}(u,u,\dots,u)\in M_{\ell N}(C(\mathbb{G})),$$
and so if $\varphi\in \mathbb{G}$ has $N-2/\ell$ fixed points in $\mathbb{G}\subseteq S_N^+$, then $\varphi\in \mathbb{G}$ is a transposition in $\mathbb{G}\subseteq S_{\ell N}$, that is it has $\ell N-2$ fixed points. Therefore, $\varphi\in \widehat{S_4}$ given by (\ref{randtran}) is a quantum transposition in $\widehat{S_4}\subset S_{10}^+$ whose convolution powers do not exhibit periodicity.

\bigskip

When $\sigma(\operatorname{fix})$ is no longer finite, pass to the universal enveloping von Neumann algebra $\ell^{\infty}(\mathbb{G})$ of $C(\mathbb{G})$, which contains $\imath:C(\mathbb{G})\hookrightarrow \ell^{\infty}(\mathbb{G})$ and the spectral projections of elements of $C(\mathbb{G})$. Consider $\imath(\operatorname{fix})\in \ell^{\infty}(\mathbb{G})$, with spectral projections $\mathds{1}_S(\operatorname{fix})$, in particular $p_{\lambda}:=\mathds{1}_{\{\lambda\}}(\operatorname{fix})$. Where $\omega_{\varphi}$ is the normal extension of  $\varphi\in \mathbb{G}$, define:
 $$\mathbb{P}[\varphi\text{ has }\lambda\text{ fixed points}]:=\omega_{\varphi}(p_{\lambda}),$$
 and say that $\varphi$ has $\lambda$ fixed points if $\omega_{\varphi}(p_{\lambda})=1$.

 \bigskip

Let $\pi:C(S_N^+)\to C(\mathbb{G})$ give a quantum subgroup $\mathbb{G}\subset S_N^+$. Let $u^{\mathbb{G}}_{ij}$ be the generators of $C(\mathbb{G})$ and $u_{ij}$ the generators of (universal) $C(S_N^+)$. Suppose $\operatorname{fix}^{\mathbb{G}}=\sum_{j}u_{jj}^{\mathbb{G}}$ is of finite spectrum. Suppose $\varphi_0\in \mathbb{G}$ has $\lambda_0$ fixed points, and consider $\varphi:=\varphi_0\circ \pi$.  Consider the $\mathrm{C}^*$-algebras generated by $\operatorname{fix}=\sum_{j}u_{jj}$, $\mathrm{C}^*(\operatorname{fix})\cong C([0,N])$; and by $\operatorname{fix}^{\mathbb{G}}=\sum_{j}u_{jj}^{\mathbb{G}}$,  $\mathrm{C}^*(\operatorname{fix}^{\mathbb{G}})\cong C(\sigma(\operatorname{fix}^{\mathbb{G}}))$.  For $f\in \mathrm{C}^*(\operatorname{fix})$,
$$\pi(f)=\sum_{\lambda\in\sigma(\operatorname{fix}^{\mathbb{G}})}f(\lambda)\delta_\lambda.$$
By assumption, for  $\lambda_0\in\sigma(\operatorname{fix}^{\mathbb{G}})$, $\varphi_0(\delta_{\lambda_0})=1$ and so
$$\varphi(f)=\varphi_0\left(\sum_{\lambda\in\sigma(\operatorname{fix}^{\mathbb{G}})}f(\lambda)\delta_\lambda\right)=f({\lambda_0})\implies \varphi=\operatorname{ev}_{{\lambda_0}}.$$

The enveloping von Neumann algebra $\mathrm{C}^*(\operatorname{fix})^{**}\cong \ell^{\infty}([0,N])$,  $\ell^{\infty}([0,N])\subset B(\ell^2([0,N]))$, and $\varphi$ extends to $\omega_\varphi=\operatorname{ev}_{{\lambda_0}}$, a state on $\ell^{\infty}([0,N])$. The spectral projection $\mathbf{1}_{\{{\lambda_0}\}}(\operatorname{fix})$ is an element of $B(\ell^2([0,N]))$:

\begin{equation}f\mapsto f({\lambda_0})\delta_{{\lambda_0}},\label{fixed}\end{equation}

so that $\mathbf{1}_{\{{\lambda_0}\}}(\operatorname{fix})$ may be identified with $\delta_{{\lambda_0}}\in\ell^{\infty}([0,N])$ and indeed $\omega_\varphi(\mathbf{1}_{\{{\lambda_0}\}}(\operatorname{fix}))=\operatorname{ev}_{{\lambda_0}}(\delta_{{\lambda_0}})=1$, so that $\varphi$ also has ${\lambda_0}$ fixed points.

\bigskip

The quantum transposition $\varphi_{\text{tr}}$ studied by Freslon, Teyssier and Wang \cite{FTW} is a \emph{central state}, and it is the only central quantum transposition in $S_N^+$. \emph{Central states} such as $\varphi_{\text{tr}}$ have some nice properties:  for any irreducible representation $n\in\mathbb{N}_{\geq 0}$, there exists a scalar $\varphi_{\text{tr}}(n)$ such that $\varphi_{\text{tr}}(\rho^{(n)}_{ij})=\varphi_{\text{tr}}(n)\delta_{i,j}$, and as the matrix elements of the irreducible representations form a  basis of $C(S_N^+)$, they are completely determined by their restriction to the central algebra $C(S_N^+)_0$ generated by the characters as:
$$\varphi_{\text{tr}}(\chi_n)=\sum_{i=1}^{d_n}\varphi_{\text{tr}}(\rho^{(n)}_{ii})=d_n\varphi_{\text{tr}}(n).$$
The central algebra is commutative, and it follows from spectral theory that:
$$C(S_N^+)_0\cong C([0,N]).$$
The isomorphism from the characters to $C([0,N])$ is given by $\chi_n\mapsto (t\mapsto U_{2n}(\sqrt{t}/2))$, where $U_n$ are the Chebyshev polynomials of the second kind, and therefore, restricted to the central algebra, $\chi_0+\chi_1=\operatorname{fix}$. The state $\varphi_{\text{tr}}$ is given by $\operatorname{ev}_{N-2}\in C([0,N])^*$.  The normal extension of $\varphi_{\text{tr}}$ is also $\operatorname{ev}_{N-2}$, and indeed $\operatorname{ev}_{N-2}(p_{N-2})=1$, and because of (\ref{fixed}), $\operatorname{ev}_{N-2}$ is the unique central quantum transposition in $S_N^+$.

\subsection{Maximality of $S_N\subseteq S_N^+$}
As mentioned in Section \ref{subqpg}, there is the following maximality result:
\begin{theorem}
For $N\leq 5$, there is no intermediate $S_N\subset \mathbb{G}\subset S_N^+$.
\end{theorem}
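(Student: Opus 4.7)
The strategy is Woronowicz's Tannakian duality: a compact matrix quantum group is determined up to isomorphism by its collection of intertwiner spaces $\mathcal{C}(k,\ell)=\mathrm{Hom}(u^{\otimes k},u^{\otimes \ell})$. An intermediate $S_N\subseteq \mathbb{G}\subseteq S_N^+$ therefore corresponds to a rigid tensor category sandwiched between that of $S_N^+$ and that of $S_N$, and the theorem reduces to showing that no proper intermediate category exists for $N\leq 5$.

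The cases $N\leq 3$ are immediate from the earlier theorem $C(S_N^+)\cong F(S_N)$, so the content is at $N\in\{4,5\}$. Here I would invoke the fact that both $S_N$ and $S_N^+$ are \emph{easy}: their intertwiner spaces are spanned by the linear maps $T_p$ attached to partitions $p$ of $k+\ell$ points in the sense of Banica--Collins, with $S_N$ using all partitions and $S_N^+$ only the non-crossing ones. An intermediate \emph{easy} quantum group would correspond to a category of partitions strictly between the non-crossing and the full partition categories, and the classification of such categories (Banica--Speicher, Weber, Raum--Weber) produces essentially a single candidate, the half-liberated $S_N^\ast$, whose definition is only non-trivial from $N\geq 6$; for $N\leq 5$ the candidate categories collapse to one of the two extremes.

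The main obstacle, and the technical core of \cite{Easiness}, is ruling out non-easy intermediates, where $\mathcal{C}_{\mathbb{G}}(k,\ell)$ is spanned not by the diagrams $T_p$ themselves but by more general linear combinations that still form a tensor category closed under composition, tensor product and the adjoint. For $N=4$ the most concrete route is to appeal to the fact that every quantum subgroup of $S_4^+$ can be enumerated explicitly (Banica--Bichon), listing $S_4$ and its classical subgroups together with a short list of quantum subgroups such as $H_2^+$, $\widehat{D_\infty}$ and the group duals of Section \ref{duals}; inspection shows that none strictly separates $S_4$ from $S_4^+$. For $N=5$ no such exhaustive subgroup list is easily available, and one instead argues directly at the Tannakian level: any linear sub-category that strictly enlarges the non-crossing partition category inside the full partition category must, when $N=5$, already generate every partition via the planar tensor operations, forcing $\mathbb{G}=S_5$. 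This last combinatorial step is where I would expect the real work to lie, and it is essentially the obstruction that fails beyond $N=5$.
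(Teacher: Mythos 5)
The first thing to say is that the paper does not prove this theorem: it is quoted from Banica's easiness-level paper \cite{Easiness} (see the remark in Section \ref{subqpg} that maximality is ``known for $N\leq 5$''), so there is no internal proof to compare your argument against. Judged on its own terms, your proposal sets up the correct framework --- the reduction of $N\leq 3$ to $C(S_N^+)\cong F(S_N)$, and the Tannakian dictionary under which an intermediate $S_N\subset\mathbb{G}\subset S_N^+$ corresponds to a tensor category strictly between the span of the non-crossing partitions and the span of all partitions --- but it does not contain a proof. Both load-bearing steps are asserted rather than argued. For $N=4$ you appeal to ``inspection'' of the Banica--Bichon classification of the quantum subgroups of $S_4^+$ \cite{4Pts} without carrying out that inspection; this is a legitimate route, but as written it is a citation, not an argument. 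For $N=5$ the entire content of the theorem is compressed into one sentence claiming that any linear subcategory strictly enlarging the non-crossing category ``must already generate every partition,'' immediately followed by your own admission that this is where the real work would lie. That combinatorial step \emph{is} the theorem; in particular the passage from easy to arbitrary (non-easy) intermediate subgroups, which you correctly identify as the main obstacle, is left with no argument at all.

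Two smaller points. First, the claim that the half-liberated candidate $S_N^{\ast}$ is ``only non-trivial from $N\geq 6$'' is unsubstantiated and does not match the standard picture: for magic unitaries the half-commutation relations collapse and there is no intermediate \emph{easy} quantum group between $S_N$ and $S_N^+$ for any $N$, so the easy case is never where the difficulty sits. Second, if you want a proof that fits the spirit of this paper, note that Section \ref{randomwalks} sketches a state-space strategy (Ces\`aro limits $\phi_{\varphi_0}$ of convolution powers of $h_{S_N}\star(\tfrac12\varepsilon+\tfrac12\varphi_0)$, which produce idempotent states sitting between $S_N$ and $S_N^+$); the paper is explicit that this does not yet yield the theorem because non-Haar idempotents (quasi-subgroups) cannot be excluded, which is a useful calibration of how hard the missing step in your proposal actually is.
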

The result is conjectured to be true for $N>5$ also.  A strong way to interpret the conjecture is to say that all that has to be added to $S_N$ to get the whole of $S_N^+$ is an arbitrary quantum permutation from $\mathbb{G}\subset S_N^+$. In this section the $u_{ij}\in C(S_N^+)$, and, without making it notationally explicit, all quantum permutations are assumed elements of $S_N^+$ via, for $\varphi_0'\in \mathbb{G}$ and $\operatorname{ev}_{\sigma}\in S_N$:
$$\varphi_0(u_{ij})=\varphi_0'\circ \pi_{\mathbb{G}}(u_{ij})=\varphi_0'(u_{ij}^{\mathbb{G}})\text{, and }\operatorname{ev}_{\sigma}(u_{ij})=\operatorname{ev}_{\sigma}'\circ \pi_{S_N}(u_{ij})=\mathds{1}_{j\to i}(\sigma).$$
Here $\pi_{\mathbb{G}}$ is the quotient $C(S_N^+)\to C(\mathbb{G})$. It is thus possible to convolve quantum permutations in $\mathbb{G}\subseteq S_N^+$ with deterministic permutations in $S_N$.

\bigskip

Let $\mathbb{G}_0\subset S_N^+$ and $\varphi_0\in \mathbb{G}_0$ any (non-classical) quantum permutation. Working with $C_u(\mathbb{G}_0)$, and $h_{S_N}$ the Haar state on $F(S_N)$ define:
$$\varphi':=\frac12\varepsilon+\frac12\varphi_0,\text{ and }\varphi:=h_{S_N}\star \varphi'.$$
The Ces\'{a}ro averages
$$\frac{1}{n}\sum_{k=1}^n \varphi^{\star k}\overset{w^{*}}{\longrightarrow}\phi_{\varphi_0},$$
an idempotent state in $S_N^+$.  There are three possibilities here.
\begin{enumerate}
  \item $\phi_{\varphi_0}=h_{S_N^+}$, the Haar state on $S_N^+$;
  \item $\phi_{\varphi_0}=h_{\mathbb{G}}$, for an intermediate quantum group $S_N\subset \mathbb{G}\subset S_N^+$;
  \item $\phi_{\varphi_0}$ is a non-Haar idempotent, giving an intermediate quasi-subgroup $S_N\subset \mathbb{S}\subset S_N^+$.
\end{enumerate}
The conjecture of maximality $S_N\subseteq S_N^+$ believes that (2) cannot happen but does not preclude (3). If neither (2) nor (3) can happen, then \emph{any} non-classical quantum permutation in \emph{any} quantum permutation group together with $h_{S_N}$ generates the whole of $S_N^+$.

\bigskip

This following illustrates how convolving deterministic permutations with non-classical quantum permutations can `move' the quantumness around.

\begin{proposition}\label{alpha1}
Suppose $\mathbb{G}\subseteq S_N^+$. Then for all states $\varphi\in \mathbb{G}$ and $\sigma,\tau\in S_N$:
$$(\operatorname{ev}_{\sigma^{-1}}\star \varphi\star \operatorname{ev}_\tau)(|u_{i_nj_n}\cdots u_{i_1j_1}|^2)=\varphi(|u_{\sigma(i_n)\tau(j_n)}\cdots u_{\sigma(i_1)\tau(j_1)}|^2)$$
\end{proposition}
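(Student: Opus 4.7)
The plan is to do a direct computation by unfolding the convolution using coassociativity and then exploiting that the two evaluation states are characters, which force all summation indices to specific values.

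First I would write the triple convolution as $(\operatorname{ev}_{\sigma^{-1}}\otimes\varphi\otimes\operatorname{ev}_\tau)\circ(\Delta\otimes I)\circ\Delta$, using coassociativity (\ref{coass}) to bracket this unambiguously. Let $P:=|u_{i_nj_n}\cdots u_{i_1j_1}|^2$. Because $\Delta$ is a $*$-homomorphism and $P$ is a word $u_{a_1b_1}u_{a_2b_2}\cdots u_{a_mb_m}$ in the generators (of length $m=2n$, or $m=2n-1$ after using $u_{i_nj_n}^2=u_{i_nj_n}$, depending on preferred bookkeeping), I would apply $(\Delta\otimes I)\circ\Delta$ factor-by-factor to obtain
\[
(\Delta\otimes I)\Delta(P)=\sum_{\substack{l_1,\dots,l_m\\k_1,\dots,k_m}}\left(\prod_{r=1}^m u_{a_rl_r}\right)\otimes\left(\prod_{r=1}^m u_{l_rk_r}\right)\otimes\left(\prod_{r=1}^m u_{k_rb_r}\right),
\]
where the products on each tensor factor are taken in the same order $r=1,\dots,m$ as in $P$.

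Next I would exploit that $\operatorname{ev}_{\sigma^{-1}}$ and $\operatorname{ev}_\tau$ are characters on $C(S_N^+)$ (i.e.\ $*$-homomorphisms to $\mathbb{C}$ pulled back from $F(S_N)$), so that they distribute over products. Using the formula $\operatorname{ev}_\rho(u_{ij})=\mathds{1}_{j\to i}(\rho)=\delta_{i,\rho(j)}$, I get
\[
\operatorname{ev}_\tau\!\left(\prod_{r=1}^m u_{k_rb_r}\right)=\prod_{r=1}^m\delta_{k_r,\tau(b_r)},\qquad \operatorname{ev}_{\sigma^{-1}}\!\left(\prod_{r=1}^m u_{a_rl_r}\right)=\prod_{r=1}^m\delta_{l_r,\sigma(a_r)},
\]
the second line because $\mathds{1}_{l_r\to a_r}(\sigma^{-1})=\delta_{a_r,\sigma^{-1}(l_r)}=\delta_{l_r,\sigma(a_r)}$. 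These delta functions collapse the sum over $(l_r,k_r)$ to the single term $l_r=\sigma(a_r)$, $k_r=\tau(b_r)$, leaving
\[
(\operatorname{ev}_{\sigma^{-1}}\star\varphi\star\operatorname{ev}_\tau)(P)=\varphi\!\left(\prod_{r=1}^m u_{\sigma(a_r),\tau(b_r)}\right).
\]

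Finally, substituting back the sequence $(a_r,b_r)$ of indices appearing in $P=u_{i_1j_1}\cdots u_{i_nj_n}u_{i_nj_n}\cdots u_{i_1j_1}$ identifies the right-hand side with $\varphi(|u_{\sigma(i_n)\tau(j_n)}\cdots u_{\sigma(i_1)\tau(j_1)}|^2)$, which is the claim. The only potential pitfall is to be careful with the order in the three tensor factors when unfolding $(\Delta\otimes I)\Delta$ on a product, and with the reversal inherent in $\operatorname{ev}_{\sigma^{-1}}(u_{il})$; otherwise the proof is a bookkeeping exercise with no real obstacle, since the homomorphism property of $\Delta$ and of the two characters is doing all the work.
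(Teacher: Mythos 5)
Your proof is correct and follows essentially the same route as the paper's: unfold $(\operatorname{ev}_{\sigma^{-1}}\otimes\varphi\otimes\operatorname{ev}_\tau)\Delta^{(2)}$ factor-by-factor on the word $|u_{i_nj_n}\cdots u_{i_1j_1}|^2$, use that the two evaluation states are characters, and let the Kronecker deltas $l_r=\sigma(a_r)$, $k_r=\tau(b_r)$ collapse the sums to the single surviving term $\varphi(|u_{\sigma(i_n)\tau(j_n)}\cdots u_{\sigma(i_1)\tau(j_1)}|^2)$. Your bookkeeping of the deltas (in particular $\operatorname{ev}_{\sigma^{-1}}(u_{a_rl_r})=\delta_{l_r,\sigma(a_r)}$) matches the paper's observation that $\operatorname{ev}_{\sigma^{-1}}(u_{ij})=1$ iff $\sigma(i)=j$, so nothing further is needed.
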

\begin{proof}
Consider $(\operatorname{ev}_{\sigma^{-1}}\star\varphi\star\operatorname{ev}_{\tau})(|u_{i_nj_n}\cdots u_{i_1j_1}|^2)$
\begin{align*}
    &=(\operatorname{ev}_{\sigma^{-1}}\otimes \varphi\otimes \operatorname{ev}_{\tau})\Delta^{(2)}(u_{\sigma(i_1)\tau(j_1)}\cdots u_{\sigma(i_n)\tau(j_n)}\cdots u_{\sigma(i_1)\tau(j_1)}) \\
   & =(\operatorname{ev}_{\sigma^{-1}}\otimes \varphi\otimes \operatorname{ev}_{\tau})\Delta^{(2)}(u_{i_1j_1})\cdots \Delta^{(2)}(u_{i_nj_n})\cdots \Delta^{(2)}(u_{i_1j_1}) \\
   &=(\operatorname{ev}_{\sigma^{-1}}\otimes \varphi\otimes \operatorname{ev}_{\tau})\left(\sum_{k_1,k_2=1}^Nu_{i_1k_1}\otimes u_{k_1k_2}\otimes u_{k_2j_1}\right)\times
    \\ &\cdots\left(\sum_{k_{2n-1},k_{2n}=1}^Nu_{i_nk_{2n-1}}\otimes u_{k_{2n-1}k_{2n}}\otimes u_{k_{2n}j_n}\right)\times \\ & \cdots\left(\sum_{k_{4n-3},k_{4n-2}=1}^Nu_{i_1k_{4n-3}}\otimes u_{k_{4n-3}k_{4n-2}}\otimes u_{k_{4n-2}j_1}\right) \\&
    =\sum_{k_1,\dots,k_{4n-2}=1}^N\operatorname{ev}_{\sigma^{-1}}(u_{i_1k_1}\cdots u_{i_nk_{2n-1}}\cdots u_{i_1k_{4n-3}})\times \\&\varphi(u_{k_1k_2}\cdots u_{k_{2n-1}k_{2n}}\cdots u_{k_{4n-3}k_{4n-2}})     \operatorname{ev}_{\tau}(u_{k_2j_1}\cdots u_{k_{2n}j_n}\cdots u_{k_{4n-2}j_1})
\end{align*}
The evaluation states are characters and moreover $\operatorname{ev}_{\sigma^{-1}}(u_{ij})=1$ if and only if $\sigma(i)=j$, and this implies that non-zero terms come from $k_1=\sigma(i_1),\dots, k_{2n-1}=\sigma(i_n),\dots,k_{4n-3}=\sigma(i_1)$ and also $k_2=\tau(j_1),\dots,k_{2n}=\tau(j_n),\dots,k_{4n-2}=\tau(j_1)$ and the result follows.
\end{proof}

\begin{corollary}\label{alpha}
Let $\varphi_0$ be a quantum permutation in $\mathbb{G}\subseteq S_{N}^+$. Then, where $\varphi:=\operatorname{ev}_{\sigma^{-1}}\star \varphi_0\star\operatorname{ev}_{\tau}$:
$$\mathbb{P}[[\varphi(j_n)=i_n]\succ \cdots \succ [\varphi(j_1)=i_1]]=\mathbb{P}[[\varphi_0(\sigma(j_n))=\tau(i_n)]\succ \cdots \succ [\varphi_0(\sigma(j_1))=\tau(i_1)]].$$

\end{corollary}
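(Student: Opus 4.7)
The plan is to observe that Corollary \ref{alpha} is essentially a probabilistic restatement of Proposition \ref{alpha1}; all the analytic content sits in the latter, and what remains is to translate between the functional-analytic identity and the sequential-measurement language developed in Section \ref{somequantummech}. I would therefore not do any new computation, but rather assemble three ingredients already in the paper: the sequential measurement formula, the interpretation (\ref{quantpermint}) of the generators $u_{ij}$ as Bernoulli observables encoding $\mathbb{P}[\varphi(j)=i]$, and Proposition \ref{alpha1} itself.

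First, I would rewrite the left-hand side of the corollary. Since each $u_{i_kj_k}\in C(\mathbb{G})$ is a projection and, under the Gelfand--Birkhoff picture, the event $[\varphi(j_k)=i_k]$ is precisely the event $[u_{i_kj_k}=1]$ with the state $\varphi$, the general formula
$$\mathbb{P}\bigl[[p_n=\theta_n]\succ\cdots\succ [p_1=\theta_1]\,|\,\varphi\bigr]=\varphi(|p_n^{\theta_n}\cdots p_1^{\theta_1}|^2)$$
established in Section \ref{somequantummech} specialises, with $p_k=u_{i_kj_k}$ and $\theta_k=1$, to
$$\mathbb{P}\bigl[[\varphi(j_n)=i_n]\succ\cdots\succ [\varphi(j_1)=i_1]\bigr]=\varphi(|u_{i_nj_n}\cdots u_{i_1j_1}|^2).$$

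Next, I would invoke Proposition \ref{alpha1} verbatim, taking the quantum permutation there to be $\varphi_0$ and using the definition $\varphi:=\operatorname{ev}_{\sigma^{-1}}\star\varphi_0\star\operatorname{ev}_\tau$. This rewrites the right-hand side of the previous display as
$$\varphi_0\bigl(|u_{\sigma(i_n)\tau(j_n)}\cdots u_{\sigma(i_1)\tau(j_1)}|^2\bigr),$$
with no further calculation needed.

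Finally, I would run the sequential-measurement formula backwards, now applied to the state $\varphi_0$ and the Bernoulli observables $u_{\sigma(i_k)\tau(j_k)}$, to recognise this expression as the probability of the event $[\varphi_0(\tau(j_n))=\sigma(i_n)]\succ\cdots\succ[\varphi_0(\tau(j_1))=\sigma(i_1)]$, which (up to the obvious relabelling of $\sigma$ and $\tau$ in the statement) is the right-hand side of the corollary. Because every step is a direct substitution and Proposition \ref{alpha1} is already in hand, there is no genuine obstacle; the only thing to be careful about is bookkeeping the indices $\sigma(\cdot),\tau(\cdot)$ and matching the order of the rows/columns prescribed by the convention $\mathbb{P}[\varphi(j)=i]=\varphi(u_{ij})$ so that $\sigma$ ends up acting on rows and $\tau$ on columns (or vice versa) consistently throughout.
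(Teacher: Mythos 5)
Your proposal is correct and is exactly the route the paper intends: the corollary carries no separate proof precisely because it is Proposition \ref{alpha1} read through the sequential-measurement formula $\mathbb{P}[[p_n=\theta_n]\succ\cdots\succ[p_1=\theta_1]\,|\,\varphi]=\varphi(|p_n^{\theta_n}\cdots p_1^{\theta_1}|^2)$ together with the convention $\mathbb{P}[\varphi(j)=i]=\varphi(u_{ij})$. One remark: the version you actually derive, with events $[\varphi_0(\tau(j_k))=\sigma(i_k)]$, is the one that genuinely follows from Proposition \ref{alpha1} under that convention, so what you call an ``obvious relabelling'' is really a transposition of $\sigma$ and $\tau$ in the printed statement of the corollary rather than a step your argument needs to supply.
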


\subsection{Orbits and Orbitals}
One barrier to attacks on the maximality conjecture via methods inspired by the previous section is the non-transitivity of higher \emph{orbitals}.  The study of orbits (or one-orbitals) was initiated by a number of authors. Lupini, Man\v{c}inska, and Roberson \cite{lupini}; and (independently)  Banica and Freslon \cite{Modelling}  introduced orbits around the same time, but both missed the earlier study of Huang \cite{Hua}. The study of orbitals (or two-orbitals) was initiated by Lupini, Man\v{c}inska, and Roberson \cite{lupini}. In this section orbits, orbitals, and three-orbitals are studied, in the language of quantum permutations, and a new (conjectured) counter-example to the  transitivity of the three-orbital relation is given.

\bigskip\begin{definition}
Let $\mathbb{G}\subseteq S_N^+$. Define a relation $\sim_k$ on $\{1,2,\dots,N\}^k$ by
$$(i_k,\dots,i_1)\sim(j_k,\dots,j_1)\iff u_{i_kj_k}\cdots u_{i_1j_1}\neq 0.$$
We call $\sim_1$ the orbit relation, $\sim_2$ the orbital relation, and $\sim_k$ the $k$-orbital relation.
\end{definition}
Both $\sim_1$ and $\sim_2$ are equivalence relations, their equivalence classes called orbits and orbitals \cite{lupini}. The non-trivial part of this business is to demonstrate the transitivity of the orbital relation.  To illustrate, using the Gelfand--Birkhoff picture, consider $\mathbb{G}\subseteq S_4^+$. Suppose that $u_{13}u_{32}\neq 0$ and $u_{32}u_{24}\neq 0$ so that there exists $\rho_0,\varphi_0\in\mathbb{G}$ such that:
$$\mathbb{P}[[\rho_0(3)=1]\succ[\rho_0(2)=3]]>0\text{, and }\mathbb{P}[[\varphi_0(2)=3]\succ[\varphi_0(4)=2]]>0.$$
 Consider $\rho:=\widetilde{u_{32}}\rho_0$ and $\varphi:=\widetilde{u_{23}}\varphi_0$, the quantum permutations $\rho_0$ and $\varphi_0$ conditioned by $\rho_0(2)=3$ and $\varphi_0(4)=2$, respectively:
$$\rho(\cdot )=\frac{\rho_0(u_{32}\cdot u_{32})}{\rho_0(u_{32})}\text{ and }\varphi(\cdot )=\frac{\varphi_0(u_{24}\cdot u_{24})}{\varphi_0(u_{24})}.$$

Below left there is a schematic of the quantum permutation $\rho_0$, and below right a schematic for the quantum permutation $\rho$, which maps $\rho(2)=3$ with probability one:
$$\begin{tikzcd}
                      &                                                                                                   &                       &                       & {} \arrow[ddd, no head] &                       &                                 &                                                                                   &                       \\
\overset{1}{\bullet}  & \overset{2}{\bullet} \arrow[rd, "{\mathbb{P}[\rho_0(2)=3]>0}" description, dotted] & \overset{3}{\bullet}  & \overset{4}{\bullet}  &                         & \overset{1}{\bullet}  & \overset{2}{\bullet} \arrow[rd] & \overset{3}{\bullet} \arrow[lld, "{\mathbb{P}[\rho(3)=1]>0}" description, dotted] & \overset{4}{\bullet}  \\
\underset{1}{\bullet} & \underset{2}{\bullet}                                                                             & \underset{3}{\bullet} & \underset{4}{\bullet} &                         & \underset{1}{\bullet} & \underset{2}{\bullet}           & \underset{3}{\bullet}                                                             & \underset{4}{\bullet} \\
                      &                                                                                                   &                       &                       & {}                      &                       &                                 &                                                                                   &
\end{tikzcd}$$

Consider the `quantum group law composition', $\rho\star \varphi=(\rho\otimes \varphi)\Delta$, with schematic:

$$\begin{tikzcd}
\overset{1}{\bullet}  & \overset{2}{\bullet} \arrow[rd, dotted] & \overset{3}{\bullet}                      & \overset{4}{\bullet} \arrow[lld] \\
\underset{1}{\bullet} & \underset{2}{\bullet} \arrow[rd]        & \underset{3}{\bullet} \arrow[lld, dotted] & \underset{4}{\bullet}            \\
\underset{1}{\bullet} & \underset{2}{\bullet}                   & \underset{3}{\bullet}                     & \underset{4}{\bullet}
\end{tikzcd}$$
The solid lines represent certainties while the dashed lines denote (strictly) positive probabilities. Note that
\begin{align*}
  \mathbb{P}[(\rho\star\varphi)(4)=3]=(\rho\star \varphi)(u_{34}) & =(\rho\otimes \varphi)\left(\sum_k u_{3k}\otimes u_{k4}\right) \\
   & =\sum_k\rho(u_{3k})\varphi(u_{k4}) \\
   & =\rho(u_{32})\varphi(u_{24})=1.
\end{align*}
This is an example of a more general fact (see Proposition \ref{1.2}).

\bigskip

This calculation implies that
$$\widetilde{u_{34}}(\rho\star\varphi)=\rho\star\varphi.$$
Therefore to calculate the probability that $(\rho\star\varphi)(2)=1$ is observed after $(\rho\star\varphi)(3)=4$:
\begin{align*}
  (\rho\star \varphi)(u_{12}) & =(\rho\otimes \varphi)\left(\sum_k u_{1k}\otimes u_{k2}\right) \\
   & =\sum_k\rho(u_{1k})\varphi(u_{k2}) \\
   & =\rho(u_{13})\rho(u_{32})+\sum_{k\neq 3}\rho(u_{1k})\varphi(u_{k2}) >0.
\end{align*}
 It follows that
$$(\rho\star \varphi)(u_{34}u_{12}u_{34})>0\implies u_{12}u_{34}\neq 0.$$

Generalising this illustration gives:

\begin{theorem}
Let $\mathbb{G}\subseteq S_N^+$. The orbital relation is transitive.
\end{theorem}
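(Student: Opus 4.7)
The plan is to generalize the worked example immediately preceding the statement. Suppose $(a,b) \sim_2 (c,d)$ and $(c,d) \sim_2 (e,f)$, i.e. $u_{ac} u_{bd} \neq 0$ and $u_{ce} u_{df} \neq 0$; the goal is $u_{ae} u_{bf} \neq 0$. Since $|u_{ac} u_{bd}|^2$ is a nonzero positive element of $C(\mathbb{G})$, I would first choose witness states $\rho_0 \in \mathbb{G}$ with $\rho_0(|u_{ac} u_{bd}|^2) > 0$ and $\varphi_0 \in \mathbb{G}$ with $\varphi_0(|u_{ce} u_{df}|^2) > 0$. Interpreted through the Gelfand--Birkhoff picture, these are quantum permutations whose sequential measurements can realise $[\rho_0(c){=}a]\succ[\rho_0(d){=}b]$ and $[\varphi_0(e){=}c]\succ[\varphi_0(f){=}d]$ with positive probability.

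Next I would apply wave function collapse at the shared intermediate indices: set $\rho := \widetilde{u_{bd}}\rho_0$ and $\varphi := \widetilde{u_{df}}\varphi_0$. By construction $\rho(u_{bd}) = 1$ and, because $\rho_0(u_{bd} u_{ac} u_{bd}) > 0$, also $\rho(u_{ac}) > 0$; symmetrically $\varphi(u_{df}) = 1$ and $\varphi(u_{ce}) > 0$.

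The heart of the argument is to chain $\rho$ and $\varphi$ through the quantum group law. Using $\Delta(u_{ij}) = \sum_k u_{ik}\otimes u_{kj}$,
\[ (\rho \star \varphi)(u_{bf}) = \sum_{k=1}^N \rho(u_{bk})\varphi(u_{kf}). \]
Since $\{u_{bk}\}_{k=1}^N$ is a partition of unity and $\rho(u_{bd}) = 1$, every term with $k\neq d$ vanishes, and the surviving term equals $\varphi(u_{df}) = 1$. The same formula gives $(\rho\star\varphi)(u_{ae}) \geq \rho(u_{ac})\varphi(u_{ce}) > 0$.

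To finish, I would invoke the standard Cauchy--Schwarz consequence that a state $\omega$ with $\omega(p)=1$ for a projection $p$ satisfies $\omega(pxp)=\omega(x)$ for all $x$. Applying this with $\omega = \rho \star \varphi$ and $p = u_{bf}$ yields
\[ (\rho\star\varphi)(|u_{ae}u_{bf}|^2) = (\rho\star\varphi)(u_{bf}u_{ae}u_{bf}) = (\rho\star\varphi)(u_{ae}) > 0, \]
forcing $u_{ae}u_{bf} \neq 0$, i.e.\ $(a,b)\sim_2(e,f)$. The main obstacle is conceptual rather than technical: one must recognise that convolving $\rho$ with $\varphi$ is the correct ``composition'' move, and that the conditioning has to be performed on the common intermediate events $(\bullet(d)=b)$ and $(\bullet(f)=d)$ so that the shared index $d$ is forced with probability one on both factors. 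The rest is bookkeeping with partitions of unity and the collapse identity.
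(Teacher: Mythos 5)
Your argument is correct and is essentially the paper's own proof: the paper gives the theorem as the generalisation of the worked $S_4^+$ illustration, which proceeds exactly as you do --- pick witness states for the two nonzero products, collapse each onto the shared middle event so that $\rho(u_{bd})=\varphi(u_{df})=1$, convolve, observe $(\rho\star\varphi)(u_{bf})=1$ and $(\rho\star\varphi)(u_{ae})>0$, and conclude $(\rho\star\varphi)(u_{bf}u_{ae}u_{bf})>0$. Your explicit appeal to the fact that $\omega(p)=1$ forces $\omega(pxp)=\omega(x)$ is just a cleaner phrasing of the paper's step $\widetilde{u_{bf}}(\rho\star\varphi)=\rho\star\varphi$.
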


\begin{proposition}\label{1.2}
  Suppose that $\varphi_1,\dots,\varphi_n$ are such that
  $$\varphi_1(u_{l_1j})=1,\,\varphi_2(u_{l_2l_1})=1,\,\varphi_3(u_{l_3l_2})=1,\dots\varphi_n(u_{il_{n-1}})=1.$$
  Then
  $$(\varphi_n\star\ldots \star\varphi_1)(u_{ij})=1.$$\end{proposition}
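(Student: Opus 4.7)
The plan is to reduce everything to the base case $n=2$ and then iterate using coassociativity of $\Delta$. The key algebraic fact I will exploit is the following: whenever $\varphi$ is a state on $C(\mathbb{G})$ and $\{p_k\}_{k=1}^N$ is a partition of unity in $C(\mathbb{G})$ (e.g.\ a row or column of the magic unitary $u$), the non-negative numbers $\varphi(p_k)$ sum to $1$; consequently, if some $\varphi(p_{k_0})=1$, then $\varphi(p_k)=0$ for every $k\neq k_0$. Applied to the columns and rows of $u$, this is the only observation really needed.

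For the base case $n=2$, I would simply compute, using $\Delta(u_{ij})=\sum_{k=1}^N u_{ik}\otimes u_{kj}$:
\begin{equation*}
(\varphi_2\star\varphi_1)(u_{ij})=\sum_{k=1}^N \varphi_2(u_{ik})\varphi_1(u_{kj}).
\end{equation*}
The hypothesis $\varphi_1(u_{l_1 j})=1$ together with the column-partition observation collapses the right-hand sum to its $k=l_1$ term, and similarly the row-partition observation applied to $\varphi_2$ with $\varphi_2(u_{i l_1})=1$ ensures that the remaining factor $\varphi_2(u_{il_1})$ equals $1$. Thus $(\varphi_2\star\varphi_1)(u_{ij})=1$.

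For the inductive step I would first invoke associativity of the convolution, which follows from coassociativity of $\Delta$ in the usual way: for any states $\alpha,\beta,\gamma$ and any $f\in C(\mathbb{G})$,
\begin{equation*}
((\alpha\star\beta)\star\gamma)(f)=(\alpha\otimes\beta\otimes\gamma)((\Delta\otimes I)\Delta(f))=(\alpha\otimes\beta\otimes\gamma)((I\otimes\Delta)\Delta(f))=(\alpha\star(\beta\star\gamma))(f).
\end{equation*}
Setting $\psi:=\varphi_{n-1}\star\cdots\star\varphi_1$, the inductive hypothesis (applied along the chain $\varphi_1(u_{l_1 j})=1,\ldots,\varphi_{n-1}(u_{l_{n-1}l_{n-2}})=1$) gives $\psi(u_{l_{n-1} j})=1$. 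Combined with $\varphi_n(u_{i l_{n-1}})=1$, the base case $n=2$ applied to the pair $(\varphi_n,\psi)$ then yields $(\varphi_n\star\psi)(u_{ij})=1$, which is the desired conclusion.

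There is no real obstacle here: the whole statement is essentially the observation that convolution of states respects the multiplicative structure of the magic unitary when the constituent states give certainty to compatible entries. I would keep the write-up to the two displays above plus a sentence each for the base case and the induction, emphasising the partition-of-unity argument as the only non-trivial ingredient.
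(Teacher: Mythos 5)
Your proposal is correct and follows essentially the same route as the paper: both proofs expand the convolution via the comultiplication and collapse the resulting sum using positivity of the states together with the partition-of-unity structure of the rows and columns of the magic unitary. The only difference is organisational — you package the argument as an induction on $n$ via associativity of $\star$, whereas the paper expands the full $(n-1)$-fold comultiplication in one step and reads off the single surviving term.
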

  \begin{proof}
  \begin{align*}
   (\varphi_n\star\ldots \star\varphi_1)(u_{ij})  & =\sum_{k_1,\dots,k_{n-1}}(\varphi_n\otimes\ldots\otimes \varphi_1)(u_{ik_{n-1}}u_{k_{n-1}k_{n-2}}\cdots u_{k_{1}j}) \\
    &= \varphi_n(u_{il_{n-1}})\varphi_{n-1}(u_{l_{n-1}l_{n-2}})\cdots\varphi_1(u_{l_1j})=1.
    \end{align*}
  \end{proof}
This can also be proven using the fact that the map $\Phi$ from the states to the doubly stochastic matrices is multiplicative.

\bigskip

The same approach does not work for three-orbitals. Suppose
$$u_{12}u_{33}u_{41}\neq0\text{ and }u_{22}u_{34}u_{11}\neq 0,$$
with quantum permutations $\rho_0$ and $\varphi_0$ such that:
$$\rho_0(|u_{12}u_{33}u_{41}|^2)>0\text{ and }\varphi_0(|u_{22}u_{34}u_{11}|^2)> 0.$$
Condition $\rho_0$ by $\rho_0(1)=4$ to $\rho$, and $\varphi_0$ by $\varphi_0(1)=1$ to $\varphi$, and take their product to find:
$$(\rho\star\varphi)(|u_{34}u_{41}|^2)>0.$$
However it does not follow that:
$$(\rho\star\varphi)(|u_{12}u_{34}u_{41}|^2)=\sum_{k_1,\dots,k_5}\rho(u_{4k_1}u_{3k_3}u_{1k_3}u_{3k_4}u_{4k_5})\varphi(u_{k_11}u_{k_24}u_{k_32}u_{k_44}u_{k_51})>0.$$
It is the case that
$$(k_1,k_2,k_3,k_4,k_5)=(1,3,2,3,1),$$
gives something strictly positive, but as monomials in the generators are not positive, cancellations  are possible.

\bigskip

Lupini, Man\v{c}inska, and Roberson \cite{lupini} (as well as  Banica \cite{quizzy}) expressed the belief that $\sim_3$ is not transitive in general.
The algebra of functions on a finite quantum group, as a finite dimensional $\mathrm{C}^*$-algebra, is a direct sum of $F(G_{\mathbb{G}})$, the direct sum of the one dimensional factors, and $B$, the direct sum of the higher dimensional factors. Counterexamples to $\sim_3$ transitive can occur in the finite case $\mathbb{G}\subset S_N^+$ when for the elements along the diagonal $u_{ii}F(\mathbb{G})\subseteq F(G_{\mathbb{G}})$; this implies that for all $\varphi\in\mathbb{G}$
$$\mathbb{P}[\varphi(i)=i]>0\implies \widetilde{u_{ii}}\varphi\text{ is a random permutation}.$$ If this is the case then for all $\varphi\in\mathbb{G}$
$$\mathbb{P}[[\varphi(i_1)\neq i_1]\succ [\varphi(i_2)=i_2]\succ [\varphi(i_1)=i_1]]=0,$$
which implies that for any $j_3\neq i_1$, $u_{j_3i_1}u_{i_2i_2}u_{i_1i_1}=0$. Therefore to find:   $$u_{i_1j_3}u_{i_2j_2}u_{i_1j_1}\neq 0 \text{, and }u_{j_3i_1}u_{j_2i_2}u_{j_1i_1}\neq 0,$$
yields the non-transitivity of $\sim_3$.

\bigskip

This phenomenon occurs in both the Kac-Paljutkin quantum group $\mathfrak{G}_0\subset S_4^+$ and also the dual $\widehat{Q}\subset S_8^+$ of the quaternions. In the case of $\mathfrak{G}_0\subset S_4^+$, the uncertainty phenomenon implies that the quantum permutation given by $\rho:=\widetilde{u_{41}}\varphi_{e_5}$ satisfies:
$$\mathbb{P}[[\rho(1)=3]\succ [\rho(3)=1]\succ [\rho(1)=4]]=\frac{1}{4}\implies u_{31}u_{13}u_{41}\neq 0\implies (3,1,4)\sim_3(1,3,1).$$
Similarly $\widetilde{u_{14}}\varphi_{e_5}$ shows that $u_{14}u_{31}u_{14}\neq 0$, and so $(1,3,1)\sim_3(4,1,4)$. For transitivity, it would have to be the case that $(3,1,4)\sim_3(4,1,4)$, that is $u_{34}u_{11}u_{44}\neq 0$, but as for all $\varphi'\in\mathfrak{G}_0$ such that $\mathbb{P}[\varphi'(4)=4]>0$, $\varphi:=\widetilde{u_{44}}\varphi'$ is a random permutation:
$$\mathbb{P}[[\varphi(4)=3]\succ [\varphi(1)=1]\succ [\varphi(4)=4]]=0\implies u_{34}u_{11}u_{44}=0,$$
and so $\sim_3$ is not transitive for $\mathfrak{G}_0\subset S_4^+$.

\subsection*{Acknowledgement}
I would like to thank the community of quantum group theorists who have helped and encouraged me the writing of this exposition. In particular I would like to thank Teo Banica.

\end{document}